\newtheorem{theorem}{Theorem}[section]
\newtheorem{lemma}{Lemma}[section]
\newtheorem{remark}{Remark}[section]
\def\cH{\mathcal{H}}
\def\cQ{\mathcal{Q}}
\def\cT{\mathcal{T}}
\begin{document}
\title{A Nonnested Augmented Subspace Method for Kohn-Sham Equation}

\author[1,2]{Guanghui Hu}

\author[3,4]{Hehu Xie}

\author[5]{Fei Xu}

\author[3]{Gang Zhao}

\affil[1]{Department of Mathematics, University of Macau, Macao
  S.A.R., China}

\affil[2]{UM Zhuhai Research Institute, Zhuhai, Guangdong, China}

\affil[3]{LSEC, ICMSEC, Academy of Mathematics and Systems Science,
  Chinese Academy of Sciences, Beijing 100190, China}

\affil[4]{School of Mathematical Sciences, University of Chinese
  Academy of Sciences, Beijing, 100049, China}

\affil[5]{Faculty of Science, Beijing University of Technology,
  Beijing 100124, China}

\date{}
\maketitle
\begin{abstract}
  In this paper, a novel adaptive finite element method is proposed to solve the
  Kohn-Sham equation based on the moving mesh (nonnested mesh) adaptive
  technique and the augmented subspace method.  Different from the classical
  self-consistent field iterative algorithm which requires to solve the
  Kohn-Sham equation directly in each adaptive finite element space, our
  algorithm transforms the Kohn-Sham equation into some linear boundary value
  problems of the same scale in each adaptive finite element space, and then the
  wavefunctions derived from the linear boundary value problems are corrected by
  solving a small-scale Kohn-Sham equation defined in a low-dimensional
  augmented subspace.  Since the new algorithm avoids solving large-scale
  Kohn-Sham equation directly, a significant improvement for the solving efficiency can
  be obtained.  In addition, the adaptive moving mesh technique is used to
  generate the nonnested adaptive mesh for the nonnested augmented subspace
  method according to the singularity of the approximate wavefunctions. The
  modified Hessian matrix of the approximate wavefunctions is used as the metric
  matrix to redistribute the mesh.  Through the moving mesh adaptive technique,
  the redistributed mesh is almost optimal.  A number of numerical experiments
  are carried out to verify the efficiency and the accuracy of the proposed
  algorithm.

  \vskip 1em
  \noindent{\bf Keywords.} Density functional theory, Kohn-Sham
  equation, nonnested mesh, augmented subspace method.

  \vskip 1em
  \noindent{\bf AMS subject classifications.} 65N30, 65N25, 65L15,
  65B99.
\end{abstract}

\section{Introduction}

Density functional theory (DFT) is one of the major breakthroughs in quantum
physics and quantum chemistry. In the framework of density
functional theory, the many-body problem can be simplified as the motion of
electrons without interaction in the effective potential field, which includes
the Coulomb potential of the external potential field, the Hartree potential
generated by the interaction between electrons, and the exchange-correlation
potential for the nonclassical Coulomb interaction. For the exchange-correlation
potential, it has always been a difficulty in density functional theory.  At
present, there is no exact analytical expression for the exchange-correlation
potential.  Generally, it is approximately described by the local density
approximation (LDA), the local spin density approximation (LSDA) and the
generalized gradient approximation (GGA), etc.  The Kohn-Sham equation is one of
the most important models in the calculation of electronic structure, which
transforms the wavefunctions (in $\mathbb R^{3N}$ space) describing $N$ particle
states into density function (in $\mathbb R^3$ space), so as to reduce the degrees of
freedom of equation and reduce the computational work of numerical simulation.
The idea of Kohn-Sham equation can be traced back to Thomas-Fermi model in 1927.  The model gave a primary description of the
electronic structure of atoms.  The strict theoretical analysis was described by
Hohenberg and Kohn in \cite{Hohenberg}, which proved the correctness and
feasibility of using single electron orbit to replace the wavefunctions
describing multiple electrons.

So far, lots of numerical methods for solving Kohn-Sham equation have been
developed. For instance, plane-wave method \cite{Canceschakir,liuchen} is the
most popular method in the computational quantum chemistry community. Since the
basis function is independent from the ionic position, plane-wave method has
advantage in calculating intermolecular force. Combined with the pseudopotential
method, plane-wave method plays an important role in the study of the ground and
excited states calculations, and geometry optimization of the electronic
structures. Although the plane-wave method is the most popular one in the
computational quantum chemistry community, it is inefficient in solving
non-periodic systems like molecules, nano-clusters, or materials systems with
defects, etc. Furthermore, the plane-wave method uses the global basis which
significantly affect the scalability on parallel computing platforms.  The
atomic-orbital-type basis sets \cite{HehreStewartPople,Jensen} are also widely
used for simulating materials systems such as molecules and clusters. However,
they are well-suited only for isolated systems with special boundary
conditions. It is difficult to develop a systematic basis-set for all materials
systems.  Thus over the past decades, more and more attentions are attracted to
develop efficient and scalable grid-based methods such as the finite element
method for electronic structure calculations.  The advantages of grid-based
methods include that it can use unstructured meshes and local basis sets, hence
owing high scalability on parallel computing platforms.  So far, the
applications of the finite element method in solving Kohn-Sham equation have
been studied systematically. We refer to
\cite{Baogang,BylaskaHostWeare,Castro,LinLuE,MasudKannan,Modine,Motamarri,PaskKleinSterneFong,PaskSterne,SchauerLinder,Skylaris,Suryanarayana}
and references therein for a comprehensive overview.

In this paper, we propose a nonnested augmented subspace method to solve the
Kohn-Sham equation based on the moving mesh adaptive technique and the augmented
subspace method.  Since the Coulomb potential and Hartree potential have strong
singularities, the adaptive mesh refinement is a competitive strategy to improve solving
efficiency.  Adaptive methods mainly include $h$-adaptive method, $p$-adaptive
method and $r$-adaptive method. The $h$-adaptive method uses \textit{a
  posteriori} error indicator for local refinement, $p$-adaptive method uses
higher-order polynomials on local mesh elements, $r$-adaptive method (moving
mesh method) uses control function or metric tensor (also known as metric
matrix) for mesh redistribution. In this paper, we adopt the $r$-adaptive method
to generate a series of nonnested adaptive finite element spaces. The
$r$-adaptive method uses the control function or metric matrix to guide the mesh
movement, which can improve the accuracy by moving the grid nodes to the area
with large errors.  This method can be traced back to Alexander's work in
\cite{Alexander}, and then Miller's work \cite{Miller1,Miller2} promoted the
development of $r$-adaptive method.  In 2001, Li and Tang etc
\cite{liruo1,liruo2} proposed the moving mesh method based on harmonic mapping.
Later, Di and Li etc \cite{liruo3} applied the moving mesh method based on
harmonic mapping to solve the incompressible Navier-Stokes equation.  More
results about moving mesh method can be found in \cite{di,Frey,xy,zhangw} and
the references therein.

Another technique adopted in this paper is the multilevel correction method
(augmented subspace method) \cite{JiaXieXu,LinXie,Xie_JCP,Xie_NonlinerEig}. The
traditional multilevel correction method can only be performed on the nested
multilevel space sequence. In this paper, we develop a nonnested multilevel
correction technique for Kohn-Sham equation on a nonnested mesh sequence
generated by the moving mesh technique. Different from the classical
self-consistent field iterative algorithm which requires to solve the Kohn- Sham
equation directly in each adaptive finite element space, our algorithm
transforms the Kohn-Sham equation into some linear boundary value problems of
the same scale on each level of the adaptive refinement mesh sequence, and then
the wavefunctions derived from the linear boundary value problems are corrected
by solving a small-scale Kohn-Sham equation defined in a low-dimensional
augmented subspace. Since the new algorithm avoids solving large-scale Kohm-Sham
equation directly, a significant improvement for the efficiency can be
obtained. In addition, the adaptive mesh is produced using the moving mesh
technique according to the singularity of the wavefunctions, which can
dramatically generate a high accuracy with less computational work. In our
algorithm, the approximate wavefunctions are used as the adaptive function, and
the modified Hessian matrix of the density function is used as the metric matrix to
redistribute the mesh. Through the moving mesh adaptive technique, the
redistributed mesh is almost optimal. By combining the moving mesh technique and
the nonnested augmented subspace method, the solving efficiency for Kohn-Sham
equation can be significantly improved.

The remainder of this paper is as follows. Section 2 introduces Kohn-Sham
equation and its finite element discretization method. Section 3 introduces the
nonnested augmented subspace method for Kohn-Sham equation by using the moving mesh
technique and the multilevel correction method. In Section 4, we propose the
convergence analysis and the estimate of computational work for the presented
algorithm. In Section 5, a number of numerical examples are demonstrated to
verify the efficiency and accuracy of the proposed method.

\section{Finite element method for Kohn-Sham equation}

To describe the Kohn-Sham equation and its finite element discretization, we
first introduce some notation. Following \cite{Adams}, we use $W^{s,p}(\Omega)$
to denote Sobolev spaces, and use $\|\cdot\|_{s,p,\Omega}$ and
$|\cdot|_{s,p,\Omega}$ to denote the associated norms and seminorms,
respectively. In case $p = 2$, we denote $H^s(\Omega) = W^{s,2}(\Omega)$ and
$H_0^1(\Omega) = \{v \in H^1(\Omega):v|_{\partial \Omega} = 0\}$, where
$v|_{\partial \Omega} = 0$ is in the sense of trace, and denote
$\|\cdot\|_{s,\Omega}=\|\cdot\|_{s,2,\Omega}$.  For convenience, the symbols
$ x_1 \lesssim y_1$, $x_2 \gtrsim y_2$ and $x_3 \approx y_3$ are used to
represent $x_1 \leq C_1y_1$, $x_2 \geq c_2y_2$, and
$c_3x_3 \leq y_3 \leq C_3 y_3$, respectively, where $C_1$, $c_2$, $c_3$, and
$C_3$ denote some mesh-independent constants.  In this paper, $\Omega$ is dropped from
the subscript of the norm for simplicity.

Let $\cH := (H_0^1(\Omega))^N$ be the Hilbert space with the inner product
\begin{eqnarray}
(\Phi,\Psi) = \sum_{i=1}^N\int_{\Omega}\phi_i\psi_idx,
\quad \forall\ \Phi = (\phi_1,\dots,\phi_N),
\ \ \Psi = (\psi_1,\cdots,\psi_N)\in \cH,
\end{eqnarray}
where $\Omega\subset \mathbb{R}^3$. For any
$\Psi\in\cH$ and a subdomain $\omega \subset \Omega$, we define
$\rho_{\Psi}=\sum_{i=1}^N|\psi_i|^2$ and
\begin{eqnarray*}
\|\Psi\|_{s,\omega} = \left( \sum_{i=1}^{N}
\|\psi_i\|_{s,\omega}^2\right)^{1/2}, \ \ s=0,1.
\end{eqnarray*}
Let $\cQ$ be a subspace of $\cH$ with orthonormality constraints:
\begin{eqnarray}
\cQ = \left\{  \Psi\in \cH: \Psi^T\Psi = I^{N\times N}    \right\},
\end{eqnarray}
where $\Phi^T\Psi = \big(\int_{\Omega}\phi_i\psi_jdx\big)_{i,j=1}^N\in \mathbb R^{N\times N}$.

We consider a molecular system consisting of $M$ nuclei with charges $\{Z_1$,
$\cdots$, $Z_M\}$ and locations $\{R_1$, $\cdots$, $R_M\}$, respectively, and
$N$ electrons in the non-relativistic and spin-unpolarized setting. The general
form of Kohn-Sham energy functional can be demonstrated as follows
\begin{eqnarray}
 E(\Psi) = \int_{\Omega} \left( \displaystyle\frac{1}{2}\sum_{i=1}^N|\nabla \psi_i |^2+V_{ext}(x)\rho_{\Psi}
 +E_{xc}(\rho_{\Psi})\right)dx
 +\displaystyle\frac{1}{2}D(\rho_{\Psi},\rho_{\Psi}),
\end{eqnarray}
for $\Psi = (\psi_1,\psi_2,\cdots,\psi_N)\in \cH$. Here, $V_{ext}$ is the
Coulomb potential defined by $V_{ext} = -\sum_{k=1}^{M}Z_k/|x-R_k|$,
$D(\rho_{\Phi},\rho_{\Phi})$ is the Hartree energy defined by
\begin{eqnarray}
 D(f,g) = \int_{\Omega}f(g*r^{-1})dx = \int_{\Omega}\int_{\Omega}f(x)g(y)\displaystyle\frac{1}{|x-y|}dxdy,
\end{eqnarray}
and $E_{xc}(t)$ is some real function over $[0,\infty)$ denoting the
exchange-correlation energy.

The ground state of the system is obtained by solving the minimization problem:
\begin{eqnarray}\label{minimization}
 \inf\{   E(\Psi):  \Psi \in \cQ     \},
\end{eqnarray}
and we refer to \cite{Canceschakir,ChenYang} for the existence of a minimizer
under some conditions.

The Euler-Lagrange equation corresponding to the minimization problem
(\ref{minimization}) is the well-known Kohn-Sham equation: Find
$(\Lambda, \Psi)\in \mathbb R^{N}\times \cH$ such that
\begin{equation}\label{Kohn_Sham_Equation}
\left\{
\begin{array}{l}
H_{\Psi}\psi_i =\lambda_i\psi_i\  \ {\rm in}\ \Omega, \quad i=1,2,\cdots,N,\\
\\
\displaystyle\int_\Omega \psi_i \psi_j dx =\delta_{ij},
\end{array}
\right.
\end{equation}
where $H_{\Psi}$ is the Kohn-Sham Hamiltonian operator defined by
\begin{eqnarray}
 H_{\Psi} = -\displaystyle\frac{1}{2}\Delta +V_{ext}
 +V_{Har}(\rho_{\Psi}) +V_{xc}(\rho_{\Psi})
\end{eqnarray}
with $V_{Har}(\rho_{\Psi})=r^{-1}*\rho_{\Psi}$, $V_{xc}(\rho_\Psi)=E'_{xc}(\rho_\Psi)$,
$\Lambda = (\lambda_1,\cdots,\lambda_N)$ and
$\lambda_i = (H_\Psi \psi_i,\psi_i)$.  The variational form of the Kohn-Sham
equation can be described as follows: Find
$(\Lambda, \Psi)\in \mathbb R^{N}\times \cH$ such that
\begin{equation}\label{Nonlinear_Eigenvalue_Problem2}
\left\{
\begin{array}{l}
(H_{\Psi}\psi_i,v) =\lambda_i(\psi_i,v),\quad  \forall v\in H_0^1(\Omega),\quad i=1,2,\cdots,N,\\
  \\
\displaystyle\int_\Omega \psi_i \psi_j dx = \delta_{ij}.
\end{array}
\right.
\end{equation}


Now, let us define the finite element discretization of
(\ref{Nonlinear_Eigenvalue_Problem2}).  First we generate a shape regular
decomposition $\mathcal{T}_h$ of the computing domain $\Omega$. Then, based on
the mesh $\cT_h$, we construct the linear finite element space denoted by
$S_h \subset H_0^1(\Omega)$. 
Define $V_h = (S_h)^N\subset \cH$.  Then the discrete form of
(\ref{Nonlinear_Eigenvalue_Problem2}) can be described as follows: Find
$(\bar\Lambda_h, \bar\Phi_h)\in \mathbb R^{N}\times V_h$ such that
\begin{equation}\label{Nonlinear_Eigenvalue_Problem2 fem}
\left\{
\begin{array}{l}
(H_{\bar\Psi_h}\bar\psi_{i,h},v) =\bar\lambda_{i,h}(\bar\psi_{i,h},v),
\quad\forall v\in S_h, \quad i=1,\cdots,N,\\
\\
\displaystyle\int_\Omega \bar\psi_{i,h} \bar\psi_{j,h} dx =\delta_{ij},
\end{array}
\right.
\end{equation}
with $\bar\lambda_{i,h} = (H_{\bar\Psi_h} \bar\psi_{i,h},\bar\psi_{i,h})$.

For simplicity and generality, some assumptions are given for the error analysis
of the proposed algorithm in this paper.  We would like to mention that the
following assumptions are satisfied by many practical physical models.  For
detail of the assumptions, please refer to
\cite{CancesChakirMaday,Canceschakir,ZhouAFEMKS,ChenZhou}.
\begin{itemize}
\item Assumption A: $|V_{xc}(t)|+|tV_{xc}'(t)|\in \mathcal P(p_1,(c_1,c_2))$ for
some $p_1\in [0,2]$, where $\mathcal P\big(p,(c_1,c_2)\big)$ denotes the
following function set:
\begin{eqnarray}\label{Function_Set_P} \mathcal P(p,(c_1,c_2))=\Big\{ f:\exists
a_1,a_2 \in \mathbb R \text{ such that } c_1t^p+a_1 \leq f(t) \leq c_2t^p +a_2,\
\forall t \geq 0 \Big\}
\end{eqnarray} with $c_1 \in \mathbb R$ and $c_2$, $p \in [0,\infty)$.
\item Assumption B: There exists a constant $\alpha \in (0,1] $ such that
$|V_{xc}'(t)|+|tV_{xc}''(t)|\lesssim 1+t^{\alpha-1}$.

\item Assumption C: Let $(\Lambda, \Psi)$ be a solution of
(\ref{Nonlinear_Eigenvalue_Problem2}).  When the mesh size $h$ is small enough,
the discrete solution $(\bar\Lambda_{h},\bar\Psi_h)\in \mathbb R^N \times V_h$
satisfies the following estimate
\begin{eqnarray} \|\Psi-\bar\Psi_h\|_1 &\lesssim& \delta_{h}(\Psi), \\
\|\Psi-\bar\Psi_h\|_0+|\Lambda-\bar\Lambda_h|&\lesssim&
r(V_h)\|\Psi-\bar\Psi_h\|_1,
\end{eqnarray} where $r(V_h) \ll 1$ and $$\delta_{h}(\Psi):=\inf\limits_{\Phi
\in V_h}\|\Psi-\Phi\|_1.$$
\end{itemize}

\section{Nonnested augmented subspace method for Kohn-Sham equation}

In this section, we design the nonnested augmented subspace method for Kohn-Sham
equation based on the moving mesh adaptive technique and the augmented subspace
method. In the first two subsections, we introduce some computing techniques
including the solving process for exchange-correlation potential and Hartree
potential, and the detailed process of moving mesh technique. Then next, we
combine these techniques to generate the nonnested augmented subspace method for
Kohn-Sham equation.

\subsection{The calculation of exchange-correlation potential and Hartree potential}

The Kohn-Sham equation contains two nonlinear terms: the Hartree potential and
the exchange-correlation potential.  In this subsection, we introduce the
detailed form for exchange-correlation potential and the solving process for
Hartree potential.

Since there is no exact expression for the exchange-correlation potential, we
use the local density approximation (LDA) in our numerical simulations.  The
exchange-correlation potential can be treated as the variational derivative of
the exchange-correlation energy to the density function
$V_{xc}=\delta E_{xc}^{LDA}/\delta\rho$. The exchange-correlation
energy $E_{xc}^{LDA}$ is divided into two parts: exchange energy $E_{x}^{LDA}$ and correlation
energy $E_{c}^{LDA}$, that is $E_{xc}^{LDA}=E_{x}^{LDA}+E_{c}^{LDA}$.  For the exchange energy, we use
the following approximate form given by Kohn and Sham in \cite{Kohn}:
\begin{eqnarray}
E_{x}^{LDA}(\rho) = -\displaystyle\frac{3}{4}\left(\displaystyle\frac{3}{\pi}\right)^{1/3}\int_\Omega \rho(x)^{4/3}d\Omega,
\end{eqnarray}
with the exchange-correlation potential
\begin{eqnarray}\label{expot}
V_{x}^{LDA}(\rho) = -\left(\displaystyle\frac{3\rho}{\pi}\right)^{1/3}.
\end{eqnarray}

For the relatively complex correlation energy and correlation potential, we use
the expression proposed by Perdew and Zunger in \cite{perdew}.  The correlation
energy per electron is:
\begin{equation*}
\varepsilon_{c}^{LDA}=\left\{
\begin{array}{l}
  \ A  \text{ln} r_s+B+C r_s\text{ln} r_s +D r_s,\ \ \ \ \text{if}\ \  r_s<1,\\
  \\
\displaystyle\frac{ \gamma}{(1+\beta_1\sqrt{r_s}+\beta_2 r_s)},\ \ \ \text{if}\ \  r_s\geq 1.
\end{array}
\right.
\end{equation*}
The corresponding correlation potential is
\begin{equation}\label{copot}
V_{c}^{LDA}(\rho)=\left\{
\begin{array}{l}
  \ A  \text{ln} r_s +(B-\displaystyle\frac{1}{3}A)+\displaystyle\frac{2}{3}C r_s\text{ln} r_s +\displaystyle\frac{1}{3}(2D-C) r_s,\ \ \ \ \text{if}\ \  r_s<1,\\
  \\
  \displaystyle\varepsilon_{c}^{LDA}\displaystyle\frac{1+\displaystyle\frac{7}{6}\beta_1\sqrt{r_s}+\displaystyle\frac{4}{3}\beta_2r_s}{(1+\beta_1\sqrt{r_s}+\beta_2r_s)},\ \ \ \text{if}\ \  r_s\geq 1
\end{array}
\right.
\end{equation}
with
$r_s=(3/(4\pi\rho))^{1/3}, A=0.0311, B=-0.048, \gamma=-0.1423,
\beta_1=1.0529, \beta_2=0.3334, C=0.0020, D=-0.0116$.

The ground state energy of the molecular system can be calculated through:
\begin{eqnarray}
  E=\sum_{i=1}^N\lambda_i -\int_\Omega \left(\displaystyle\frac{1}{2}V_{Hartree}+V_{xc}\right)\rho(x)d\Omega +E_{xc}+E_{nn}.
\end{eqnarray}
The last term $E_{nn}$ accounts for the interactions between the nuclei:
\begin{eqnarray}
  E_{nn}=\sum_{1\leq k<j\leq M}\displaystyle\frac{Z_jZ_k}{|x_j-x_k|}.
\end{eqnarray}

Next, we introduce the solving process for Hartree potential $V_{Har}$, which is computed by solving the
following Poisson equation:
\begin{equation}
\left\{
\begin{array}{l}
  -\Delta V_{Har}=4\pi \rho, \ \ \text{in}\ \Omega,\\
\\
V_{Har}=w_D, \ \ \ \ \ \ \ \ \text{on}\ \partial\Omega.
\end{array}
\right.
\end{equation}
Because the Hartree potential decays with the rate $N/r$, where $N$ is the
electron number, the simple use of zero Dirichlet boundary condition will
introduce large truncation error on the boundary. To give the evaluation of the
Hartree potential on the boundary, a multipole expansion approximation is
employed for the boundary values. In our numerical simulations, the following
approximation is used:
\begin{equation}\label{bd}
  w_D=w|_{\partial\Omega}\approx \displaystyle\frac{1}{|\bf x-x''|}\int_{\Omega}\rho({\bf x'})d{\bf x'} +\sum_{i=1,2,3}p_i\cdot \displaystyle\frac{x^i-x''^{,i}}{|\bf x-x''|^3} +\sum_{i,j=1,2,3}q_{i,j}\displaystyle\frac{3(x^i-x''^{,i})(x^j-x''^{,j})-\delta_{i,j}|\bf x-x''|^2}{|\bf x-x''|^5},
\end{equation}
where
\begin{eqnarray*}
  p_i= \int_{\Omega}\rho({\bf x'})(x'^i-x''^{,i})d{\bf x'}, \ \ \ \ \ \ q_{i,j}= \int_{\Omega}\rho({\bf x'})(x'^i-x''^{,i})(x'^j-x''^{,j})d{\bf x'}.
\end{eqnarray*}
In the above expressions, ${\bf x''}$ is chosen as ${\bf x''}= \int_{\Omega}{\bf x}\rho({\bf
    x})d{\bf x}\big/\int_{\Omega}\rho({\bf x})d{\bf x}$.

\subsection{Moving mesh adaptive technique}

The adaptive methods mainly include $h$-adaptive method, $p$-adaptive method and
$r$-adaptive method.  The $h$-adaptive method uses \textit{a posteriori} error
indicator for local refinement, $p$-adaptive method uses higher-order
polynomials on local mesh elements, $r$-adaptive method uses control function or
metric tensor (also known as metric matrix) for mesh redistribution.  In this
subsection, we introduce the detailed process for the $r$-adaptive method which
is used for mesh adaptive refinement in this paper.

The moving mesh adaptive ($r$-adaptive) software packages mostly control mesh
movement based on the control function or metric tensor.  Some well-known finite
element software packages such as AFEPack \cite{liruo3,liruo1,liruo2} are based
on the control function.  Other software packages such as BAMG \cite{Hecht2},
Mshmet and Mmg \cite{Dapogny,Frey} are based on metric tensor.  Essentially, the
control function and the measurement matrix are consistent.  From mathematical
point of view, they differ only by a constant factor.  From the way of
controlling the movement of the grid points, the control function is based on
the error equal distribution principle, and the measurement matrix is based on
the unit volume principle (BAMG) or the unit edge principle (mshmet and Mmg).
Here, we use the three-dimensional anisotropic adaptive software Mshmet and
Mmg3d (three-dimensional module in Mmg) based on the $M$-unit side length
principle, in which the measurement used is the correction of the Hessian matrix
$H_u(z)$ of the given adaptive function $u$ at the grid node $z$. The detailed
form is as follows:
\begin{eqnarray}\label{m1}
  \mathcal M(z)= R\widetilde \Lambda R^{-1},
\end{eqnarray}
where $z$ denotes the location of the grid node, $R$ denotes an orthogonal matrix
composed of the orthogonal eigenfunctions of the Hessian matrix $H_u(z)$,
$\widetilde \Lambda$ is defined by

\begin{equation}\label{st2}
\widetilde\Lambda:=\left(
\begin{array}{cccc}
\widetilde \lambda_1 &  &  & \\
&  \widetilde \lambda_2 &  & \\
&   &  \ddots & \\
&   &   &  \widetilde \lambda_n
\end{array}
\right).
\end{equation}
To guarantee the positive definite of the measurement matrix,
$\widetilde \lambda_i$ is set to be
\begin{eqnarray}\label{m2}
  \widetilde \lambda_i = \min \left( \max \Big( \displaystyle\frac{C_d|\lambda_i|}{\varepsilon},  \displaystyle\frac{1}{h_{\max}^2} \Big),\displaystyle\frac{1}{h_{\min}^2}\right),
\end{eqnarray}
where $\varepsilon$ is used to control the $P_1$-interpolation error in the
sense of $L^\infty$-norm, $\lambda_i$ denotes the eigenvalue of the Hessian matrix $H_u(z)$, $h_{max}$ and $h_{min}$ represent the upper limit of
the longest edge scale and the lower limit of the shortest edge scale,
respectively.  In addition, the coefficient $C_d$ is selected based on the
following $L^\infty-$error estimate (see \cite{Alauzet}):
\begin{lemma}
  Let $K$ be a mesh element (d-dimensional simplex) of mesh $\mathcal T_h$, $E_K$
  is a set of all edges on $K$, and $u$ is a quadratic differentiable
  function. Then, we have the following $L^\infty-$interpolation error estimate
\begin{eqnarray}\label{est1}
  \|u-\Pi_hu\|_{\infty,K} \leq C_d \max_{{\bf y}\in K}\max_{{\bf e}\in E_k} {\bf e}^T|H_u({\bf y})|{\bf e}, \ \ \ \forall K\in \mathcal T_h,
\end{eqnarray}
where
\begin{eqnarray}
  C_d=\displaystyle\frac{1}{2}\left(\displaystyle\frac{d}{d+1}\right)^2, \ \ |H_u|:=R |\Lambda| R^{-1},
\end{eqnarray}
with $|\Lambda|:=diag \{ |\lambda_1|,|\lambda_2|,\cdots,|\lambda_n| \}$.
\end{lemma}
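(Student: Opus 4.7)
The plan is to combine a barycentric-coordinate Taylor expansion with an edge-based Jensen estimate and then optimize a scalar coefficient over the unit simplex. I would start by writing any $\mathbf{x}\in K$ as $\mathbf{x}=\sum_{i=0}^d \lambda_i \mathbf{v}_i$ with barycentric weights $\lambda_i\ge 0$ summing to one, where $\mathbf{v}_0,\dots,\mathbf{v}_d$ are the vertices of $K$. Since $\Pi_h u$ is the $P_1$ interpolant, $\Pi_h u(\mathbf{x})=\sum_i \lambda_i u(\mathbf{v}_i)$. A second-order Taylor expansion with Lagrange remainder gives $u(\mathbf{v}_i)=u(\mathbf{x})+\nabla u(\mathbf{x})^T(\mathbf{v}_i-\mathbf{x})+\frac{1}{2}(\mathbf{v}_i-\mathbf{x})^T H_u(\xi_i)(\mathbf{v}_i-\mathbf{x})$ for some $\xi_i\in K$; multiplying by $\lambda_i$ and summing reproduces $u(\mathbf{x})$ from the constant term and cancels the linear term via $\sum_i \lambda_i(\mathbf{v}_i-\mathbf{x})=0$, yielding
\begin{equation*}
u(\mathbf{x})-\Pi_h u(\mathbf{x}) \;=\; -\frac{1}{2}\sum_{i=0}^d \lambda_i(\mathbf{v}_i-\mathbf{x})^T H_u(\xi_i)(\mathbf{v}_i-\mathbf{x}).
\end{equation*}
Taking absolute values, using $|\mathbf{v}^T H_u\mathbf{v}|\le \mathbf{v}^T|H_u|\mathbf{v}$, and then passing to the pointwise maximum over $\mathbf{y}\in K$ reduces the problem to bounding $\sum_i\lambda_i(\mathbf{v}_i-\mathbf{x})^T|H_u(\mathbf{y})|(\mathbf{v}_i-\mathbf{x})$ solely in terms of edge vectors.

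The key step converts chord vectors to edge vectors. Writing $\mathbf{v}_i-\mathbf{x}=\sum_{j\ne i}\lambda_j\mathbf{e}_{ij}$ with $\mathbf{e}_{ij}:=\mathbf{v}_i-\mathbf{v}_j\in E_K$, and noting that $\{\lambda_j/(1-\lambda_i)\}_{j\ne i}$ is a probability distribution on the edges incident to $\mathbf{v}_i$, I would apply Jensen's inequality to the convex quadratic form associated with the positive semi-definite matrix $|H_u(\mathbf{y})|$, obtaining
\begin{equation*}
(\mathbf{v}_i-\mathbf{x})^T|H_u(\mathbf{y})|(\mathbf{v}_i-\mathbf{x}) \;\le\; (1-\lambda_i)\sum_{j\ne i}\lambda_j\,\mathbf{e}_{ij}^T|H_u(\mathbf{y})|\mathbf{e}_{ij}.
\end{equation*}
Bounding each edge form by $M:=\max_{\mathbf{y}\in K}\max_{\mathbf{e}\in E_K}\mathbf{e}^T|H_u(\mathbf{y})|\mathbf{e}$ and summing with the outer $\lambda_i$ weights collapses everything to $M\sum_i\lambda_i(1-\lambda_i)^2$.

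The last ingredient is the scalar optimization of $g(\lambda):=\sum_{i=0}^d \lambda_i(1-\lambda_i)^2$ over the unit simplex. A Lagrange-multiplier computation (or a direct symmetry argument) identifies the centroid $\lambda_i=1/(d+1)$ as an interior critical point with value $(d/(d+1))^2$; checking the boundary faces, where some $\lambda_k=0$ reduces the problem to a lower-dimensional simplex with maximum $((d-1)/d)^2$, and using $(d-1)/d<d/(d+1)$, confirms this is the global maximum. Inserting this into the prefactor $\frac{1}{2}$ recovers exactly $C_d=\frac{1}{2}\bigl(d/(d+1)\bigr)^2$. I expect the main obstacle to be the global aspect of this last optimization, since the Jensen step itself is clean once the positive semi-definiteness of $|H_u|$ is acknowledged; a cruder bound $\sum_i\lambda_i(1-\lambda_i)^2\le 1$ would still give the correct form of the estimate but would lose the sharp constant claimed in the lemma.
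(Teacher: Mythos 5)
Your proof is essentially correct, but note that the paper itself does not prove this lemma at all --- it simply quotes the estimate from the cited Alauzet--Frey report --- so the comparison is really against the standard proof in that reference. That classical argument is different from yours: it locates the interior point $\mathbf{x}^*$ where $u-\Pi_h u$ attains its extremum, uses $\nabla(u-\Pi_h u)(\mathbf{x}^*)=0$ (since $\Pi_h u$ is affine, $H_{u-\Pi_hu}=H_u$) to kill the linear term in a single Taylor expansion toward one well-chosen vertex $\mathbf{v}_i$ with barycentric coordinate $\lambda_i\ge 1/(d+1)$, and then applies the same chord-to-edge convexity step to get the factor $(1-\lambda_i)^2\le(d/(d+1))^2$ directly by pigeonhole. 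Your route instead averages the Taylor remainders over all $d+1$ vertices and pushes the constant into the optimization of $g(\lambda)=\sum_i\lambda_i(1-\lambda_i)^2$ over the simplex; this avoids any appeal to the stationarity of the error at its extremum (so it bounds $|u-\Pi_hu|$ at \emph{every} point of $K$, not just at the extremum), at the price of a harder scalar maximization. The decomposition $\mathbf{v}_i-\mathbf{x}=\sum_{j\ne i}\lambda_j\mathbf{e}_{ij}$, the Jensen step with the weights $\lambda_j/(1-\lambda_i)$, and the bound $|\mathbf{v}^TH_u\mathbf{v}|\le\mathbf{v}^T|H_u|\mathbf{v}$ are all sound. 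The one place you are too quick is the claim that the centroid is the \emph{global} maximizer of $g$: identifying it as an interior critical point and checking the boundary does not by itself exclude other interior critical points. The Lagrange condition $3\lambda_i^2-4\lambda_i=\mu$ forces each $\lambda_i$ to be one of two roots summing to $4/3$, and a short feasibility check (one root must exceed $1/2$, so it can occur at most once, which forces the other root out of $[0,1]$ for $d\ge 2$) shows the centroid is in fact the only interior critical point; you should include that elimination step to make the sharp constant $C_d=\frac{1}{2}\bigl(d/(d+1)\bigr)^2$ airtight.
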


The computation of the right term of (\ref{est1}) is nontrivial. Thus, we first
assume that there exists a measurement $\overline{\mathcal M}(K)$, such that
\begin{eqnarray}
  \max_{{\bf y}\in K} {\bf e}^T|H_u({\bf y})|{\bf e} \leq {\bf e}^T \overline{\mathcal M}(K){\bf e},\quad \forall {\bf e}\in E_k.
\end{eqnarray}
To derive $\|u-\Pi_hu\|_{\infty,K} \leq \varepsilon$, it is sufficient to
require that
\begin{eqnarray}\label{lonp}
  C_d{\bf e}^T \overline{\mathcal M}(K){\bf e}=\varepsilon,\quad \forall {\bf e}\in E_k, \ \ \text{or} \ \ C_d\max_{{\bf e}\in E_k}{\bf e}^T \overline{\mathcal M}(K){\bf e}=\varepsilon.
\end{eqnarray}
By defining the measurement $\mathcal M := (C_d/\varepsilon) \overline{\mathcal M}$, and based on
(\ref{lonp}), we know the measurement satisfies the following $\mathcal M-$unit
edge length principle
\begin{eqnarray}\label{lonp2}
{\bf e}^T \mathcal M(K){\bf e}=1,\quad \forall {\bf e}\in E_k.
\end{eqnarray}
That is, the length of side ${\bf e}$ of element $K$ under $\mathcal M$ is 1,
which is denoted by
\begin{eqnarray}\label{lonp3}
\ell_{ \mathcal M(K)}({\bf e})=1,\quad \forall {\bf e}\in E_k.
\end{eqnarray}

In the actual calculation, $\mathcal M$ can be determined by
(\ref{m1})$-$(\ref{m2}).  In fact, in the software package Mshmet, we only need
to adjust the parameters err$(\varepsilon)$, hmin$(h_{min})$, hmax$(h_{max})$ to
control the edge length of mesh elements. For more detailed introduction to the
metric based adaptive mesh using the $M-$unit edge length principle, please
refer to \cite{Alauzet,Frey}. For more discussion on metric matrices and control
functions, please refer to \cite{di,xy,zhangw}.

Next, we introduce a fast interpolation algorithm between two nonnested meshes.
Let $\Omega_0$ and $\Omega_1$ be two different computing domains,
$\mathcal T_h^{(0)}=\cup_iK_i^{(0)}$ and $\mathcal T_h^{(1)}=\cup_iK_i^{(1)}$ be
the corresponding mesh decompositions on these two domains, respectively. The
two decompositions are nonnested.  Then, we briefly introduce the fast
interpolation algorithm proposed by the FreeFEM team to deal with nonnested
meshes \cite{Hecht}.

Let
$V_h^{(\ell)}:=\big\{v\in C^0(\mathcal T_h^{(\ell)}) \big|\
v|_{{K_i}^{(\ell)}}\in P_1\big\}, \ \ell=0,1,$ be the finite element space
defined on $\mathcal T_h^{(0)}$ and $\mathcal T_h^{(1)}$. For $f\in V_h^{(0)}$,
the problem is to find $g\in V_h^{(1)}$ such that:
\begin{eqnarray}
g(q)=f(q),\quad \forall \text{ vertex } q \text{ of } \mathcal T_h^{(1)}.
\end{eqnarray}
A fast interpolation algorithm is proposed in \cite{Hecht} which is of
complexity $\mathcal O(N_1 {\rm{ln}} N_0)$, where $N_0$ is the number of
vertices of $\mathcal T_h^{(0)}$, $N_1$ is the number of vertices of
$\mathcal T_h^{(1)}$.  The detailed process is presented in Algorithm
\ref{interp}.

\begin{algorithm}[H]
  \KwData{First a quadtree is built containing all the vertices of the
    mesh $\mathcal T_h^{(0)}$ such that in each terminal element there
    are at least one, and at most $2^d$ vertices.}

  \KwResult{$g(q^1)$}

  \For{each vertex $q^1$ in $\mathcal T_h^{(1)}$}{
    Find the terminal cell of the quadtree containing $q^1$\;
    Find the the nearest vertex $q_j^0$ to $q^1$ in that cell\;
    Choose one triangle $K_i^{(0)}\in \mathcal T_h^{(0)}$ which has $q_j^0$ for vertex\;
    Compute the barycentric coordinates $\{\lambda_j\}_{j=1}^{d+1}$ of $q^1$ in $K_i^{(0)}$\;
    \While{Not all barycentric coordinates are positive}{

      \uIf{One barycentric coordinate $\lambda_i$ is negative}{
        replace $K_i^{(0)}$ by the adjacent triangle opposite $q_i^0$
      }
      \ElseIf{two barycentric coordinates are negative}{
        take one of the two randomly and replace $K_i^{(0)}$ by the adjacent triangle as above
      }
      Compute the barycentric coordinates $\{\lambda_j\}_{j=1}^{d+1}$ of $q^1$ in $K_i^{(0)}$\;
    }
    Calculate $g(q^1)$ on $K_i^{(0)}$ by linear interpolation of $f$: $g(q^1)=\sum_{j=1}^{d+1} \lambda_j f(q_j^{(0)})$\;
  }
  \caption{Fast interpolation algorithm in FreeFEM}
  \label{interp}
\end{algorithm}


\subsection{Nonnested augmented subspace method for Kohn-Sham equation}

In this subsection, we introduce the nonnested augmented subspace
algorithm for Kohn-Sham equation.  To simplify the description, we define a
nonlinear operator which is composed of the Kohn-Sham Hamiltonian operator and a
positive constant:
\begin{eqnarray}
\widetilde{H}_\Psi = -\displaystyle\frac{1}{2}\Delta +V_{ext}+\mu+ V_{Har}(\rho_\Psi)+ V_{xc}(\rho_\Psi),\quad\forall \Psi\in V,
\end{eqnarray}
where $\mu$ is a positive constant that can be used to guarantee the coercive
property.

The essence of the nonnested augmented method is to transform the solution of
the Kohn-Sham equation on the fine mesh into solving the linear boundary value
problems of the same scale and solving a small-scale Kohn-Sham equation in a
low-dimensional subspace.  In order to describe the nonnested augmented subspace
algorithm, we generate a coarse mesh $\mathcal T_H$ and corresponding linear
finite element space $S_H$.  The traditional multilevel correction method
requires the multilevel finite element space sequence satisfies the nested
relationship
\begin{eqnarray}
  \mathcal T_H \subset \mathcal T_{h_1} \subset \cdots \subset \mathcal T_{h_N},\text{ and } S_H \subset  S_{h_1} \subset \cdots \subset S_{h_N}.
\end{eqnarray}
The nonnested augmented subspace method presented in this paper does not require
the mesh and finite element space to meet the above nested property, which
guarantees that the augmented subspace method can be used in moving mesh
sequence. In Algorithm \ref{One_Correction_Step}, the flowchart of the moving
mesh adaptive algorithm for the Kohn-Sham equation is given.

\begin{algorithm}[H]\label{One_Correction_Step}
  \KwData{A given electronic structure, and the initial guess of the ground state}
  \KwResult{Ground state of the given electronic structure}

  Give $K_{asm}, K_{max}\in\mathbb{N}$ with $K_{asm} < K_{max}$, $tol\in\mathbb{R}^+$, and let $k = 0$\;
  \Repeat{$k > K_{asm}$ or $\Delta E/E < tol$}{
    Using the standard self-consistent field iteration method to solve the problem: Find
    $(\Lambda_{h_k},\Psi_{h_k})\in \mathbb R\times V_{h_k}$, such
    that 
    \begin{equation}\label{ksalg1}
      \left\{
        \begin{array}{rcl}
          ( H_{\Psi_{h_k}}\psi_{i,h_k},v_{h_k})  &=&\lambda_{i,h_k}(\psi_{i,h_k},v_{h_k}),
                                                     \quad\forall v_{h_k}\in S_{h_k},\ i=1,\cdots,N,\\
          \int_\Omega \psi_{i,h_k} \psi_{j,h_k} dx &=&\delta_{ij}.
        \end{array}
      \right.
    \end{equation}

    Move the mesh grids according to the measurement (\ref{m1}) by using $\sqrt{\rho_{h_{k}}}$ as the adaptive function\;
    Update the mesh from $\mathcal T_{h_{k}}$ to $\mathcal T_{h_{k+1}}$ and interpolate the wavefunctions into the new mesh\;
    $k = k + 1$\;
  }
  \While{$k < K_{max}$ and $\Delta E/E > tol$}{
    Using Algorithm \ref{MMAA} to solve the above problem (\ref{ksalg1})\;
    Move the mesh grids according to the measurement (\ref{m1}) by using $\sqrt{\rho_{h_{k}}}$ as the adaptive function\;
    Update the mesh from $\mathcal T_{h_{k}}$ to $\mathcal T_{h_{k+1}}$ and interpolate the wavefunctions into the new mesh\;
    $k = k + 1$\;

  }

  \caption{Moving mesh adaptive algorithm for Kohn-Sham equation}
\end{algorithm}

Next, we introduce the an augmented subspace method to solve the Kohn-Sham equation
(\ref{ksalg1}), which transforms the large-scale nonlinear eigenvalue problem
into some linear boundary value problems of the same scale and small-scale
Kohn-Sham equations defined in a low-dimensional augmented subspace. Through the
augmented subspace method, the total computational work is asymptotically
optimal since the dimension of the augmented subspace is small and fixed. The
detailed process is described in Algorithm \ref{MMAA}.

\begin{algorithm}[!htbp]
  \caption{Augmented subspace method for Kohn-Sham equation}\label{MMAA}
  \KwData{The initial value $(\Lambda_{h_k}^{(0)},\Psi_{h_k}^{(0)})$}
  \KwResult{The ground state $(\Lambda_{h_k}^{(\ell)},\Psi_{h_k}^{(\ell)})$}
  Give $K_{max}\in\mathbb{N}$, and $tol\in\mathbb{R}^+$, and let $\ell = 0$\;
  \Repeat{$\|\rho_{h_k}^{(\ell)}-\rho_{h_k}^{(\ell - 1)}\|_0 < tol$ or $k > K_{max}$}{
    Solve the following $N$ problems: Find $\widehat{\psi}_{i,h_k}^{(\ell+1)} \in S_{h_k}$ such that
    \begin{eqnarray}\label{Aux_Linear_Problem}
      \begin{array}{rcr}
        \left(\widetilde H_{\Psi_{h_k}^{(\ell)}}\widehat{\psi}_{i,h_k}^{(\ell+1)},v_{h_k}\right) = \left(\lambda_{i,h_k}^{(\ell)}+\mu\right)(\psi_{i,h_k}^{(\ell)},v_{h_k}),
        \ \ \ \forall v_{h_k} \in S_{h_k}, \ i=1,\cdots,N.
      \end{array}
    \end{eqnarray}

    Solve the following small-scale Kohn-Sham equation in an augmented subspace
    $S_{H,h_k}=S_H+{\rm span}\{\widehat{\Psi}_{h_k}^{(\ell+1)}\}$: Find
    $(\lambda_{i,h_k}^{(\ell+1)},\psi_{i,h_k}^{(\ell+1)}) \in \mathbb R\times
    S_{H,h_k}$ such that
    \begin{eqnarray}\label{Nonlinear_Eig_Hh}
      \begin{array}{rcr}
        && \left(H_{\Psi_{h_k}^{(\ell+1)}}\psi_{i,h_k}^{(\ell+1)},v_{h_k}\right) = \lambda_{i,h_k}^{(\ell+1)}(\psi_{i,h_k}^{(\ell+1)},v_{h_k}),\ \ \ \forall v_{h_k} \in S_{h_k},\  i=1,\cdots,N,
      \end{array}
    \end{eqnarray}
    to get the new approximate eigenpair $(\Lambda_{h_k}^{(\ell+1)}, \Psi_{h_k}^{(\ell+1)})$\;
    Update the density function $\rho_{h_k}^{(\ell+1)}=\sum_{i=1}^N\psi_{i,h_k}^{(\ell+1)}$\;
    Let $\ell = \ell + 1$\;
  }
\end{algorithm}

From Algorithm \ref{MMAA}, we can find that the Kohn-Sham equation
(\ref{Nonlinear_Eig_Hh}) is defined in a low-dimensional space $S_{H,h_k}$, thus
a small-scale linear eigenvalue problem is needed to be solved in each nonlinear
iteration step, which requires little computational work.  But the augmented
subspace $S_{H,h_k}$ includes the basis functions
$\widehat{\Psi}_{h_k}^{(\ell+1)}$ derived from the fine mesh $\mathcal T_{h_k}$,
so the matrices assembling should be performed on the fine mesh to keep the
accuracy.

Next, we introduce the detailed process for solving the small-scale Kohn-Sham
equation (\ref{Nonlinear_Eig_Hh}) in $S_{H,{h_k}}$.  For simplicity, we use $h$,
$V_h$, $\widehat\psi_{i,h}$ to denote $h_k$, $V_{h_k}$,
$\widehat\psi_{i,h_k}^{(\ell+1)}$, respectively.  Define $N_H:=\text{dim} S_H$
and $N_k:=\text{dim} S_{h_k}$. Let $\{\psi_{j,H}\}_{j=1}^{N_H}$ be the basis
functions for $S_H$.  When solving (\ref{Nonlinear_Eig_Hh}) by iteration method,
a linear eigenvalue problem as follows is required to be solved in each
iteration step:
\begin{eqnarray}\label{st1}
F_AU_H=\lambda_h F_BU_H,
\end{eqnarray}
where
\begin{equation}\label{st2}
F_A:=\left(
\begin{array}{cc}
A_H & {\bf b}_H\\
{\bf b}_H^T&{\bf \beta}
\end{array}
\right), \ \
F_B:=\left(
\begin{array}{cc}
B_H & {\bf c}_H\\
{\bf c}_H^T&{\bf \gamma}
\end{array}
\right), \ \
U_H=\left(
\begin{array}{c}
\mathbf{u}_{H} \\
{\bf \alpha}
\end{array}
\right),
\end{equation}
with $A_H, B_H\in \mathbb R^{N_H\times N_H}$,
${\bf b}_H, {\bf c}_H \in \mathbb R^{N_H\times N}$,
$\beta, \gamma \in \mathbb R^{N\times N}$.  Here,
$U_H=(\mathbf{u}_{H}, \alpha)^{\rm T}$ is the eigenvector to be solved in each
iteration step, and $\text{dim}U_H=\text{dim}V_H+N$, $N$ is the number of
eigenpair to be solved. The mass matrix
$B_H=\big( (\phi_{i,H},\phi_{j,H})_{\mathcal T_H} \big)_{1\leq i,j\leq N_H}$
remains unchanged during the nonlinear iteration, which can be assembled once at
the beginning of Algorithm \ref{MMAA}.  Hereafter, $(\cdot,\cdot)_{\mathcal T}$
denotes the $L^2$-inner product on the mesh $\mathcal T$.

For the matrices
$c_H=\big( (\widehat\psi_{j,h},\psi_{i,H})_{\mathcal T_h} \big)_{1\leq i\leq
  N_H, 1\leq j\leq N}$ and
$\gamma=\big( (\widehat\psi_{j,h},\widehat\psi_{i,h})_{\mathcal T_h}
\big)_{1\leq i,j\leq N}$, the involved basis functions
$\{\widehat\psi_{j,h}\}_{1\leq j\leq N}$ are defined on the fine mesh
$\mathcal T_h$.  Thus, to keep the accuracy, these matrices assembling should be
performed on the fine mesh $\mathcal T_h$.  As we can see, the basis functions
$\{\widehat\psi_{j,h}\}_{1\leq j\leq N}$ remain unchanged during the nonlinear
iteration step, thus the matrices $c_H$ and $\gamma$ also can be assembled once
at the beginning of Algorithm \ref{MMAA}.

It is noted that, based on the above discussion, the mass matrix $F_B$ only need
to be assembled once at the beginning of the nonlinear iteration in Algorithm
\ref{MMAA}. No updates are required during the iteration.

Next, we discuss the stiffness matrix $F_A$. Let us divide the stiffness matrix
into linear and nonlinear parts.  The linear part $A_h^L$ is defined by:
\begin{eqnarray}\label{st3} (A_h^L)_{i,j} = \displaystyle\frac{1}{2}(\nabla \psi_{i,h},\nabla
\psi_{j,h})_{\mathcal T_h} +(V_{ext} \psi_{i,h}, \psi_{j,h})_{\mathcal T_h}+(\mu
\psi_{i,h}, \psi_{j,h})_{\mathcal T_h}.
\end{eqnarray}
The nonlinear part $A_h^{NL}$ consisting of the Hartree potential and
exchange-correlation potential is defined by:
\begin{eqnarray}\label{st4} (A_h^{NL})_{i,j} = ((V_{Har}+V_{xc}) \psi_{i,h},
\psi_{j,h})_{\mathcal T_h},
\end{eqnarray}
where $V_{Har}$ and $V_{xc}$ need to be updated in each nonlinear iteration
because they depends on the density function that will be updated after each
iteration.  Let us define $A_h:=A_h^{L}+A_h^{NL}$.  Using the interpolation
algorithm defined in Algorithm \ref{interp}. The matrix $A_H$ involved in $F_A$
can be calculated in the following way
\begin{eqnarray}\label{st5}
  A_H = (I_{H}^{h})^{\rm T}A_hI_{H}^{h},
\end{eqnarray}
where $I_H^h$ denotes the interpolate operator from $S_H$ to $S_{h}$ based on
Algorithm \ref{interp}.

The remaining parts $b_H$ and $\beta$ can be calculated by:
\begin{eqnarray}\label{st6} b_H = (I_{H}^{h})^{\rm T}A_h\widehat \Psi_h
\end{eqnarray} and
\begin{eqnarray}\label{st7} \beta = \widehat\Psi_h^{\rm T}A_h\widehat \Psi_h.
\end{eqnarray}

\begin{remark}
  Based on the assembling process (\ref{st1})$-$(\ref{st7}), the main
  computational work is spent on (\ref{st4}) in each nonlinear
  iteration. Fortunately, the matrix assembling can be performed in parallel
  since there is no date transfer, and meanwhile the software package used in
  this paper has an excellent parallel ability \cite{Jolivet,Moulin}, thus we
  assemble the matrix using parallel computing technique.  
\end{remark}

\subsection{Convergence analysis and computational work estimate}

In this subsection, we give the convergence analysis and computational
complexity estimate for Algorithms \ref{One_Correction_Step} and \ref{MMAA}.
First, we can obtain the following theorem to guarantee the well-posedness of
the linear boundary value problems.
\begin{theorem}\label{coercive lemma}
  With sufficiently small mesh size, there exists a positive constant $\mu$ such
  that the following coercive property holds true
\begin{eqnarray}\label{coer}
  a_\mu (\phi,\phi) +(V_{Har}(\rho_{\Psi_h})\phi,\phi) +(V_{xc}(\rho_{\Psi_h})\phi,\phi)   \gtrsim\|\nabla \phi\|_0^2,\quad\forall \phi\in H_0^1(\Omega),
\end{eqnarray}
where
\begin{eqnarray}
  a_\mu (\phi,\phi)=\displaystyle\frac{1}{2}(\nabla \phi,\nabla \phi)+(V_{ext}\phi,\phi) + \mu(\phi,\phi).
\end{eqnarray}
\end{theorem}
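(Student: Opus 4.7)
The plan is to bound each indefinite or singular piece of the left-hand side by a small fraction of $\|\nabla\phi\|_0^2$ plus a constant multiple of $\|\phi\|_0^2$, to discard the non-negative Hartree contribution, and finally to choose $\mu$ large enough to dominate the accumulated $L^2$ mass. The two pieces that need real work are $(V_{ext}\phi,\phi)$ and $(V_{xc}(\rho_{\Psi_h})\phi,\phi)$; the Hartree term $(V_{Har}(\rho_{\Psi_h})\phi,\phi)\ge 0$ pointwise because $V_{Har}$ is the convolution of a non-negative density with the positive kernel $|x|^{-1}$, so it is simply dropped from the lower bound.

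For the Coulomb part I would invoke the standard three-dimensional Hardy-type split
\[
\int_\Omega \frac{|\phi|^2}{|x-R_k|}\,dx \le \epsilon_1 \|\nabla\phi\|_0^2 + C_1(\epsilon_1)\|\phi\|_0^2,
\]
applied at each nuclear site and summed with the weights $Z_k$, which gives $|(V_{ext}\phi,\phi)|\le \epsilon_1\|\nabla\phi\|_0^2 + C_1(\epsilon_1)\|\phi\|_0^2$. For the exchange--correlation part I would invoke Assumption A, $|V_{xc}(t)|\le c_2 t^{p_1}+a_2$ with $p_1\in[0,2]$, then apply H\"older with conjugate exponents $3/p_1$ and $3/(3-p_1)$ together with the Sobolev embedding $H_0^1(\Omega)\hookrightarrow L^6(\Omega)$ and interpolation between $L^2$ and $L^6$, obtaining
\[
\int_\Omega \rho_{\Psi_h}^{p_1}\phi^2\,dx \le \|\rho_{\Psi_h}\|_{L^3}^{p_1}\,\|\phi\|_{L^{6/(3-p_1)}}^2,
\]
and a Young-type splitting then yields $|(V_{xc}(\rho_{\Psi_h})\phi,\phi)|\le \epsilon_2\|\nabla\phi\|_0^2 + C_2(\epsilon_2)\|\phi\|_0^2$. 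The uniform bound on $\|\rho_{\Psi_h}\|_{L^3}$ follows because $\rho_{\Psi_h}=\sum_i |\psi_{i,h}|^2$ with $\psi_{i,h}\in H_0^1\hookrightarrow L^6$ and Assumption C guarantees $\Psi_h\to\Psi$ in $H^1$ for small $h$. Assembling the three estimates with the definition of $a_\mu$ produces
\[
\text{LHS} \ge \Bigl(\tfrac12 - \epsilon_1 - \epsilon_2\Bigr)\|\nabla\phi\|_0^2 + \bigl(\mu - C_1(\epsilon_1) - C_2(\epsilon_2) - a_2\bigr)\|\phi\|_0^2,
\]
so picking $\epsilon_1+\epsilon_2 < 1/4$ followed by $\mu\ge C_1(\epsilon_1)+C_2(\epsilon_2)+a_2$ delivers the claimed coercive bound $\gtrsim \|\nabla\phi\|_0^2$.

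The main obstacle I anticipate is the endpoint $p_1=2$ in the exchange--correlation step: there the H\"older/Sobolev chain pins $\phi$ to $L^6$ with no slack for an arbitrarily small $\epsilon_2$, leaving the coefficient in front of $\|\nabla\phi\|_0^2$ as the fixed constant $c_2\|\rho_{\Psi_h}\|_{L^3}^2$. This is usually resolved either by restricting to $p_1<2$ (which already covers LDA exchange and correlation, where $p_1\le 4/3$), or by exploiting additional regularity of $\rho_{\Psi_h}$, e.g. an $L^\infty$ control inherited from Assumption C combined with elliptic regularity of the Kohn--Sham eigenfunctions away from the nuclei. Apart from this endpoint bookkeeping, the argument is routine once the constants in the Young and Hardy inequalities are tracked carefully.
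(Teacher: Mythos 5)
Your overall strategy---absorb each singular or indefinite term into $\epsilon\|\nabla\phi\|_0^2 + C(\epsilon)\|\phi\|_0^2$ and then choose $\mu$ large enough to dominate the accumulated $L^2$ mass---is exactly the paper's, and your Coulomb estimate coincides with the paper's use of the uncertainty-principle (Hardy) inequality followed by Cauchy--Schwarz and Young. There are two points of divergence. First, you discard the Hartree term as pointwise nonnegative; the paper instead bounds $\|r^{-1}\ast\rho_{\Psi_h}\|_{0,\infty}\le 2\|\nabla\Psi_h\|_0\|\Psi_h\|_0$ and absorbs it like the other terms. Your shortcut is valid and slightly cleaner, at the cost of not matching the explicit $\mu$ the paper records (which carries a Hartree contribution). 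Second, and more substantively, for the exchange--correlation term you work from Assumption A ($|V_{xc}(t)|\lesssim t^{p_1}+1$ with $p_1\in[0,2]$) and correctly diagnose that the endpoint $p_1=2$ pins $\phi$ to $L^6$ with no slack, leaving a fixed, possibly too large, coefficient in front of $\|\nabla\phi\|_0^2$. The paper avoids this entirely: it writes $V_{xc}(\rho_{\Psi_h})=V_{xc}(0)+\bigl(V_{xc}(\rho_{\Psi_h})-V_{xc}(0)\bigr)$, applies the mean value theorem, and invokes Assumption B ($|V_{xc}'(t)|\lesssim 1+t^{\alpha-1}$, $\alpha\in(0,1]$) to obtain $V_{xc}(\rho_{\Psi_h})-V_{xc}(0)\lesssim \rho_{\Psi_h}+\rho_{\Psi_h}^{\alpha}$, i.e.\ at most linear growth of the increment. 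H\"older with the exponent splitting $\tfrac13+\tfrac12+\tfrac16=1$ (respectively $\tfrac{\alpha}{3}+\tfrac12+\tfrac{3-2\alpha}{6}=1$), putting one factor of $\phi$ in $L^2$ and the other in $L^6$ (or $L^{6/(3-2\alpha)}$), then yields a bound of the form $C\|\phi\|_0\|\nabla\phi\|_0$, which Young-splits with an arbitrarily small gradient coefficient. So the missing ingredient in your argument is not additional regularity of $\rho_{\Psi_h}$ or a restriction to $p_1<2$, but simply Assumption B, which is already among the standing hypotheses; with it your endpoint obstacle disappears and the rest of your proof goes through as written.
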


\begin{proof}
  For the Coulomb potential $V_{ext} = -\sum_{k=1}^{M}Z_k/|x-R_k|$, using
  the following uncertainty principle lemma \cite{Reed}:
\begin{eqnarray}\label{uncertainlemma}
  \int_{\Omega}\displaystyle\frac{w^2(x)}{|x|^2}dx\leq 4\int_{\Omega}|\nabla w|^2dx,\quad\forall w\in H_0^1(\Omega),
\end{eqnarray}
we obtain
\begin{eqnarray}\label{est_col}
(V_{ext}\phi,\phi)&\leq& \left(\int_{\Omega}\Big(\sum\limits_{k=1}^{M}\displaystyle\frac{Z_k\phi }{|x-R_k|}\Big)^2dx\right)^{1/2}\|\phi\|_0 \nonumber\\
&\leq& \left(\int_{\Omega}\Big(\sum\limits_{k=1}^{M}Z_K^2\Big)
\Big(\sum\limits_{k=1}^{M}\displaystyle\frac{\phi^2 }{|x-R_k|^2}\Big)dx\right)^{1/2}\|\phi\|_0\nonumber\\
&\leq& 2\sqrt{M}\left(\sum\limits_{k=1}^{M}Z_K^2\right)^{1/2}\|\nabla \phi\|_0\|\phi\|_0 \nonumber\\
&\leq& \displaystyle\frac{1}{8}\|\nabla \phi\|_0^2 +8M\left(\sum\limits_{k=1}^{M}Z_K^2\right)\|\phi\|_0^2.
\end{eqnarray}
For the Hartree potential, using (\ref{uncertainlemma}) and H\"{o}lder inequality, we have
\begin{eqnarray}\label{est_har}
(V_{Har}(\rho_{\Psi_h})\psi,\psi)&\leq& \|r^{-1}\ast \rho_{\Psi_h}\|_{0,\infty}\|\phi\|_0\|\phi\|_0 \nonumber\\
&\leq& 2\|\nabla \Psi_h\|_0\|\Psi_h\|_0\|\nabla \phi\|_0\|\phi\|_0\nonumber\\
&\leq& \displaystyle\frac{1}{8}\|\nabla \phi\|_0^2 +8\|\nabla \Psi_h\|_0^2\|\Phi_h\|_0^2\|\phi\|_0^2.
\end{eqnarray}
For the exchange-correlation potential, we have
\begin{eqnarray}\label{mva}
(  V_{xc}(\rho_{\Psi_{h}})\phi,\phi  ) &=& (  ( V_{xc}(\rho_{\Psi_{h}}) - V_{xc}(0) )\phi,\phi  ) +  ( V_{xc}(0)\phi,\phi  ).
\end{eqnarray}
From Assumption A, we have $|V_{xc}(0)|\leq a_2$. For the first part of
(\ref{mva}), the following estimate holds
\begin{eqnarray}\label{mvb}
  V_{xc}(\rho_{\Psi_{h}}) - V_{xc}(0) &=& 2\sum_{i=1}^{N}V_{xc}'(\rho_{\Psi_{\varepsilon}})\psi_{\varepsilon,i}\psi_i
\end{eqnarray}
with $\psi_{\varepsilon,i}=\theta\psi_{i,h}+(1-\theta)0=\theta\psi_{i,h}$ and
$\theta\in [0,1]$. Using Assumption B, we can further derive
\begin{eqnarray}\label{mvc}
V_{xc}(\rho_{\Psi_h})-V_{xc}(0)&\leq& 2\left(1+\left(\sum_{i=1}^{N}\psi_{\varepsilon,i}^2\right)^{\alpha-1}\right)
\sum_{i=1}^{N}\psi_{\varepsilon,i}\psi_{i,h}\nonumber\\
&=&2\left(1+\left(\sum_{i=1}^{N}(\theta\psi_{i,h})^2\right)^{\alpha-1}\right)\sum_{i=1}^{N}\theta\psi_{i,h}^2
\leq 2\rho_{\Psi_h}+2\rho_{\Psi_h}^\alpha.
\end{eqnarray}
Inserting (\ref{mvc}) into (\ref{mva}) and using Holder inequality, we can
derive
\begin{equation}\label{mvd}
  \begin{array}{lll}
    (  V_{xc}(\rho_{\Psi_{h}})\phi,\phi  ) &\leq& 2(\rho_{\Psi_h}\phi,\phi) +2(\rho_{\Psi_h}^\alpha\phi,\phi) +a_2\|\phi\|_0^2  \\
    &&\\
                                           &\leq& 2\|\rho_{\Psi_h}\|_{0,3}\|\phi\|_0\|\phi\|_{0,6} +2\|\rho_{\Psi_h}^\alpha\|_{0,3/\alpha}\|\phi\|_0\|\phi\|_{0,6/(3-2\alpha)}  +a_2\|\phi\|_0^2 \\
    &&\\
                                           &\leq& 2C_{em}(\|\nabla\Psi_h\|_{0}^2+\|\nabla\Psi_h\|_0^{2\alpha})\|\phi\|_0\|\nabla\phi\|_{0} +a_2\|\phi\|_0^2, \\
                                           &&\\
&\leq& \displaystyle\frac{1}{8}\|\nabla\phi\|_{0}^2 + 8C_{em}^2(\|\nabla\Psi_h\|_{0}^2+\|\nabla\Psi_h\|_0^{2\alpha})^2\|\phi\|_0^2 +a_2\|\phi\|_0^2,
       \end{array}
\end{equation}
where $C_{em}$ is the mesh-independent constant involved in embedding theorem
$H_0^1(\Omega)\hookrightarrow L^6(\Omega)$, when $\Omega\subset \mathbb R^3$.

Combining (\ref{est_col}), (\ref{est_har}) and (\ref{mvd}), the following
inequality holds
\begin{equation}
  \begin{array}{l}
    a_\mu (\phi,\phi) +(V_{Har}(\rho_{\Psi_h})\phi,\phi) +(V_{xc}(\rho_{\Psi_h})\phi,\phi)\\\hskip 5em \geq \displaystyle\frac{1}{2}\|\nabla\phi\|_0^2 +\displaystyle\mu\|\phi\|_0^2 -\displaystyle\frac{1}{8}\|\nabla\phi\|_0^2 -\displaystyle\frac{1}{8}\|\nabla\phi\|_0^2 -\displaystyle\frac{1}{8}\|\nabla\phi\|_0^2-8M\Big(\sum\limits_{k=1}^{M}Z_K^2\Big)\|\phi\|_0^2
    \\\hskip 10em - 8\|\nabla\Psi_h\|_0^2\|\Psi_h\|_0^2\|\phi\|_0^2 -8C_{em}^2(\|\nabla\Psi_h\|_{0}^2+\|\nabla\Psi_h\|_0^{2\alpha})^2\|\phi\|_0^2 - a_2\|\phi\|_0^2.
  \end{array}
\end{equation}
Then we derive the desired result (\ref{coer}) by choosing
$$\mu= 8M\Big(\sum\limits_{k=1}^{M}Z_K^2\Big)+ 8\|\nabla\Psi_h\|_0^2\|\Psi_h\|_0^2
+8C_{em}^2(\|\nabla\Psi_h\|_{0}^2+\|\nabla\Psi_h\|_0^{2\alpha})^2 + a_2,$$ and the proof is
completed.
\end{proof}

\begin{theorem}\label{error_one_correction_Theorem}
  Under Assumption A-C, the eigenpair approximation
  $(\Lambda_{h_k}^{(\ell+1)},\Psi_{h_k}^{(\ell+1)})$ derived by by Algorithm
  \ref{MMAA} has following error estimates
\begin{equation}
  \|\Psi-\Psi_{h_k}^{(\ell+1)}\|_1 \lesssim |\Lambda-\Lambda_{h_k}^{(\ell)}|+\|\Psi-\Psi_{h_k}^{(\ell)}\|_{0}+\|\Psi-\mathbb P_{h_k}\Psi\|_1,\label{Error_Eigenfunction_H1_k1}
\end{equation}
\begin{equation}
  \|\Psi-\Psi_{h_k}^{(\ell+1)}\|_0+|\Lambda-\Lambda_{h_k}^{(\ell+1)}| \lesssim   r(V_{h_k})\|\Psi-\Psi_{h_k}^{(\ell+1)}\|_1,\label{Error_Eigenfunction_L2_k1}
\end{equation}
where the projection operator $\mathbb P_{h_k}:\mathcal H\rightarrow V_{h_k}$ is
defined by: $\mathbb P_{h_k}=(P_{h_k})^N$ and
\begin{eqnarray}\label{def4proj}
a_\mu(P_{h_k}u,v_{h_k})=a_\mu(u,v_{h_k}),\quad \forall v_{h_k}\in S_{h_k}, \ \forall u\in H_0^1(\Omega).
\end{eqnarray}
\end{theorem}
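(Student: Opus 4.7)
The plan is to decompose the total error into two pieces: the discrepancy between the auxiliary wavefunction $\widehat{\Psi}_{h_k}^{(\ell+1)}$ from the linear solve \eqref{Aux_Linear_Problem} and the $a_\mu$-projection $\mathbb P_{h_k}\Psi$, plus a Galerkin quasi-optimality error arising from the augmented-subspace solve \eqref{Nonlinear_Eig_Hh}. Since $\widehat{\Psi}_{h_k}^{(\ell+1)}$ is by construction contained in $S_{H,h_k}$, it serves as an admissible competitor against which the Galerkin error of $\Psi_{h_k}^{(\ell+1)}$ can be measured. The $L^2$ and eigenvalue estimates then follow from a duality argument in the spirit of Assumption C.

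For the first stage, I would subtract \eqref{Aux_Linear_Problem} from the continuous Kohn--Sham equation rewritten in shifted form $(\widetilde H_\Psi\psi_i,v)=(\lambda_i+\mu)(\psi_i,v)$ tested against $v_{h_k}\in S_{h_k}$. Using the $a_\mu$-orthogonality \eqref{def4proj} of the projection, the error $e_i:=\mathbb P_{h_k}\psi_i-\widehat\psi_{i,h_k}^{(\ell+1)}$ satisfies a linear equation with the same left-hand operator $\widetilde H_{\Psi_{h_k}^{(\ell)}}$ as in \eqref{Aux_Linear_Problem}, and a right-hand side that collects (i) an eigenvalue residual $(\lambda_i-\lambda_{i,h_k}^{(\ell)})(\psi_{i,h_k}^{(\ell)},v_{h_k})$, (ii) an $L^2$-type residual $(\lambda_{i,h_k}^{(\ell)}+\mu)(\psi_i-\psi_{i,h_k}^{(\ell)},v_{h_k})$, and (iii) nonlinear potential residuals involving the differences $V_{Har}(\rho_\Psi)-V_{Har}(\rho_{\Psi_{h_k}^{(\ell)}})$ and $V_{xc}(\rho_\Psi)-V_{xc}(\rho_{\Psi_{h_k}^{(\ell)}})$. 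Testing with $v_{h_k}=e_i$ and invoking the coercivity of Theorem \ref{coercive lemma} bounds $\|e_i\|_1$ by these three groups of residuals. A triangle inequality together with the Galerkin quasi-optimality $\|\Psi-\Psi_{h_k}^{(\ell+1)}\|_1\lesssim\|\Psi-\widehat\Psi_{h_k}^{(\ell+1)}\|_1$ over $S_{H,h_k}$ then produces \eqref{Error_Eigenfunction_H1_k1}.

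Estimate \eqref{Error_Eigenfunction_L2_k1} is obtained via a standard Aubin--Nitsche-type duality argument applied to the linearized Kohn--Sham operator. The augmented subspace $S_{H,h_k}$ contains the coarse finite element space $S_H$, which furnishes the approximation of the dual solution needed to deliver the factor $r(V_{h_k})$. The eigenvalue bound $|\Lambda-\Lambda_{h_k}^{(\ell+1)}|\lesssim r(V_{h_k})\|\Psi-\Psi_{h_k}^{(\ell+1)}\|_1$ then follows from the usual Rayleigh-quotient identity, in which the eigenvalue gap is quadratic in the wavefunction error (plus a linear nonlinear-correction term that can be absorbed into the same bound).

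The principal obstacle is to control the nonlinear potential residuals linearly by $\|\Psi-\Psi_{h_k}^{(\ell)}\|_0$. For the exchange-correlation contribution, I would proceed as in \eqref{mvb}--\eqref{mvc}: use the mean value theorem to express $V_{xc}(\rho_\Psi)-V_{xc}(\rho_{\Psi_{h_k}^{(\ell)}})$, then apply Assumption B together with H\"older's inequality and the embedding $H_0^1(\Omega)\hookrightarrow L^6(\Omega)$. For the Hartree contribution, the factorization $\rho_\Psi-\rho_{\Psi_{h_k}^{(\ell)}}=\sum_i(\psi_i+\psi_{i,h_k}^{(\ell)})(\psi_i-\psi_{i,h_k}^{(\ell)})$ combined with the uncertainty principle \eqref{uncertainlemma} yields the required control, up to multiplicative factors involving $\|\Psi\|_1+\|\Psi_{h_k}^{(\ell)}\|_1$ that remain bounded under the standing assumptions.
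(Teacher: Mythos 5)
Your proposal follows essentially the same route as the paper: subtract the auxiliary linear problem from the shifted continuous equation tested against $S_{h_k}$, bound $\|\mathbb P_{h_k}\Psi-\widehat\Psi_{h_k}^{(\ell+1)}\|_1$ via the coercivity of Theorem \ref{coercive lemma} with the same three residual groups (eigenvalue, $L^2$, and the Hartree/exchange-correlation differences handled by the uncertainty principle and the mean-value-theorem argument under Assumptions A--B), then use that $\widehat\Psi_{h_k}^{(\ell+1)}\in S_{H,h_k}$ to pass to $\Psi_{h_k}^{(\ell+1)}$ by quasi-optimality, and invoke Assumption C for \eqref{Error_Eigenfunction_L2_k1}. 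The only cosmetic difference is that you spell out the Aubin--Nitsche mechanism behind the $r(V_{h_k})$ factor, whereas the paper simply cites Assumption C applied to the subspace approximation in $S_{H,h_k}$.
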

\begin{proof}
  From (\ref{Nonlinear_Eigenvalue_Problem2}) and (\ref{def4proj}), the following
  equation holds true for $i=1,\cdots, N$:
\begin{equation}\label{EQUA1}
a_{\mu}(P_{h_k} \psi_i,v_{h_k})=a_{\mu}(\psi_i,v_{h_k})
= \big( (\lambda_{i}+\mu) \psi_i,v_{h_k}\big)
-\big(V_{Har}(\rho_{\Psi})\psi_i,v_{h_k}\big)-\big(V_{xc}(\rho_{\Psi})\psi_i,v_{h_k}\big),\
\forall v_{h_k}\in S_{h_k}.
\end{equation}
Then from (\ref{Aux_Linear_Problem}) and (\ref{EQUA1}), we obtain
\begin{equation}\label{nonlinear analysis}
  \begin{array}{l}
a_{\mu}(P_{h_k} \psi_{i}-\widehat{\psi}_{i,h_k}^{(\ell+1)},v_{h_k}) + \Big(   (V_{Har}(\rho_{\Psi_{h_k}^{(\ell)}})+V_{xc}(\rho_{\Psi_{h_k}^{(\ell)}}))(P_{h_k} \psi_{i}-\widehat{\psi}_{i,h_k}^{(\ell+1)}),v_{h_k} \Big) \\
\hskip 2em =\big((\lambda_i+\mu)\psi_{i}-(\lambda_{i,h_k}^{(\ell)}+\mu)\psi_{i,h_k}^{(\ell)},v_{h_k}\big)
+(V_{Har}(\rho_{\Psi_{h_k}^{(\ell)}})P_{h_k}\psi_{i}, v_{h_k} )-\big(V_{Har}(\rho_{\Psi})\psi_{i}, v_{h_k}\big) \\
\hskip 4em +\big(V_{xc}(\rho_{\Psi_{h_k}^{(\ell)}})P_{h_k}\psi_{i},v_{h_k} \big) -\big(V_{xc}(\rho_{\Psi})\psi_{i}, v_{h_k} \big)
    , \quad \forall v_{h_k}\in S_{h_k}.
  \end{array}
\end{equation}
Now let us estimate the equation (\ref{nonlinear analysis}) termwise. We deduce
from triangle inequality that
\begin{equation}\label{first term}
\big((\lambda_i+\mu)\psi_{i}-(\lambda_{i,h_k}^{(\ell)}+\mu)\psi_{i,h_k}^{(\ell)},v_{h_k}\big)\nonumber \lesssim |\lambda_i- \lambda_{i,h_k}^{(\ell)}|\|\psi_i\|_0\|v_{h_k}\|_0
+(\lambda_{i,h_k}^{(\ell)}+\mu)\|\psi_{i}-\psi_{i,h_k}^{(\ell)}\|_0\|v_{h_k}\|_0.
\end{equation}
For the Hartree potential, using the uncertain principle lemma, the following
inequalities hold
\begin{equation}\label{D}
  \begin{array}{l}
\Big(V_{Har}(\rho_{\Psi_{h_k}^{(\ell)}})P_{h_k}\psi_{i}, v_{h_k} \Big)-\Big(V_{Har}(\rho_{\Psi})\psi_{i}, v_{h_k}\Big)\\\hskip 6em  =\Big(V_{Har}(\rho_{\Psi_{h_k}^{(\ell)}})(P_{h_k}\psi_i-\psi_i), v_{h_k}\Big)
+\Big((V_{Har}(\rho_{\Psi_{h_k}^{(\ell)}})-V_{Har}(\rho_{\Psi}))\psi_{i}, v_{h_k}\Big)\\
\hskip 6em =\big(r^{-1}*\rho_{\Psi_{h_k}^{(\ell)}},(P_{h_k}\psi_{i}-\psi_{i})v_{h_k}\big)
+\big(r^{-1}*(\rho_{\Psi_{h_k}^{(\ell)}}-\rho_{\Psi}),\psi_iv_{h_k}\big)\\
    \hskip 6em \lesssim \|r^{-1}*\rho_{\Psi_{h_k}^{(\ell)}}\|_{0,\infty}\|P_{h_k}\psi_{i}-\psi_{i}\|_0\|v_{h_k}\|_0 +\|r^{-1}*(\rho_{\Psi_{h_k}^{(\ell)}}-\rho_{\Psi})\|_{0,\infty}\|\psi_i\|_0\|v_{h_k}\|_0 \\
    \hskip 6em \lesssim \|\Psi_{h_k}^{(\ell)}\|_0\|\nabla\Psi_{h_k}^{(\ell)}\|_0\|P_{h_k}\psi_{i}-\psi_{i}\|_0\|v_{h_k}\|_0 +\|\Psi_{h_k}^{(\ell)}-\Psi\|_0\|\nabla(\Psi_{h_k}^{(\ell)}+\Psi)\|_0\|\psi_i\|_0\|v_{h_k}\|_0.
    \end{array}
\end{equation}
For any $\Gamma\in \mathcal H$, based on Assumption B, the exchange-correlation potential can be estimated by
\begin{equation}\label{echange correlation}
  \begin{array}{l}
    \Big(  V_{xc}(\rho_{\Psi_{h_k}}^{(\ell)})\mathbb P_{h_k}\Psi,\Gamma  \Big)-\Big(V_{xc}(\rho_{\Psi})\Psi, \Gamma\Big)\\
    \hskip 1em =\big(  V_{xc}(\rho_{\Psi_{h_k}}^{(\ell)})\Psi_{h_k}^{(\ell)},\Gamma  \big)-\big(V_{xc}(\rho_{\Psi})\Psi, \Gamma\big) +\big(  V_{xc}(\rho_{\Psi_{h_k}}^{(\ell)})(\mathbb P_{h_k}\Psi-\Psi_{h_k}^{(\ell)}),\Gamma  \big)\\
    \hskip 1em =2\Big(V'_{xc}(\rho_{\Psi_\xi})\displaystyle\sum_{i=1}^{N}\psi_{\xi,i}(\psi_{i,h_k}^{(\ell)}-\psi_{i}),
    \displaystyle\sum_{j=1}^{N}\psi_{\xi,j}\gamma_j\Big) +\Big(V_{xc}(\rho_{\Psi_\xi})(\Psi_{h_k}^{(\ell)}-\Psi),\Gamma\Big)\\\hskip 4em +\big(  V_{xc}(\rho_{\Psi_{h_k}}^{(\ell)})(\mathbb P_{h_k}\Psi-\Psi_{h_k}^{(\ell)}),\Gamma  \big)\\
    \hskip 1em \lesssim \Big(V'_{xc}(\rho_{\Psi_\xi})\rho_{\Psi_\xi}^{1/2}\big(\displaystyle\sum_{i=1}^{N}(\psi_{i,h_k}^{(\ell)}-\psi_{i})^2\big)^{1/2},
    \rho_{\Psi_{\xi}}^{1/2}\big(\displaystyle\sum_{j=1}^{N}\gamma_j^2\big)^{1/2}\Big) +\big(V_{xc}(\rho_{\Psi_\xi})(\Psi_{h_k}^{(\ell)}-\Psi),\Gamma\big)\\\hskip 4em +\big(  V_{xc}(\rho_{\Psi_{h_k}}^{(\ell)})(\mathbb P_{h_k}\Psi-\Psi_{h_k}^{(\ell)}),\Gamma  \big)\\
    \hskip 1em\lesssim \Big(V'_{xc}(\rho_{\Psi_\xi})\rho_{\Psi_\xi}\big(\displaystyle\sum_{i=1}^{N}\big(\psi_{i,h_k}^{(\ell)}-\psi_{i}\big)^2\big)^{1/2},
    \big(\displaystyle\sum_{j=1}^{N}\gamma_j^2\big)^{1/2}\Big) +\big(V_{xc}(\rho_{\Psi_\xi})(\Psi_{h_k}^{(\ell)}-\Psi),\Gamma\big)\\\hskip 4em +\big(  V_{xc}(\rho_{\Psi_{h_k}}^{(\ell)})(\mathbb P_{h_k}\Psi-\Psi_{h_k}^{(\ell)}),\Gamma  \big)\\
    \hskip 1em\lesssim \Big((\rho_{\Psi_\xi}+\rho_{\Psi_\xi}^\alpha)\big(\displaystyle\sum_{i=1}^{N}(\psi_{i,h_k}^{(\ell)}-\psi_{i})^2\big)^{1/2},
    \big(\displaystyle\sum_{j=1}^{N}\gamma_j^2\big)^{1/2}\Big) +\big(V_{xc}(\rho_{\Psi_\xi})(\Psi_{h_k}^{(\ell)}-\Psi),\Gamma\big)\\\hskip 4em +\big(  V_{xc}(\rho_{\Psi_{h_k}}^{(\ell)})(\mathbb P_{h_k}\Psi-\Psi_{h_k}^{(\ell)}),\Gamma  \big),
  \end{array}
\end{equation}
where $\Psi_{\xi}=\xi\Psi_{h_k}^{(\ell)}+(1-\xi)\Psi$ with $\xi\in [0, 1]$.

For the first part of (\ref{echange correlation}), we have
\begin{equation}\label{first of ex}
  \begin{array}{l}
\Big((\rho_{\Psi_\xi}+\rho_{\Psi_\xi}^\alpha)\big(\displaystyle\sum_{i=1}^{N}(\psi_{i,h_k}^{(\ell)}-\psi_{i})^2\big)^{1/2},
\big(\displaystyle\sum_{j=1}^{N}\gamma_j^2\big)^{1/2}\Big)\\
\hskip 8em \lesssim \Big((\rho_{\Psi_\xi}+\rho_{\Psi_\xi}^\alpha)\Big(\displaystyle\sum_{i=1}^{N}(\psi_{i,h_k}^{(\ell)}-\psi_{i})^2\Big)^{1/2}, \displaystyle\sum_{j=1}^{N}|\gamma_j|\Big) \\
    \hskip 8em \lesssim \displaystyle\sum_{j=1}^{N}\|\rho_{\Psi_\xi}^\alpha\|_{0,3/\alpha}\Big\|\Big(\displaystyle\sum_{i=1}^{N}(\psi_{i,h_k}^{(\ell)}-\psi_{i})^2\Big)^{1/2}\Big\|_{0,\Omega}\|\gamma_j\|_{0,6/(3-2\alpha)} \\
    \hskip 10em +\displaystyle\sum_{j=1}^{N}\|\rho_{\Phi_\xi}\|_{0,3}
\Big\|\Big(\displaystyle\sum_{i=1}^{N}(\psi_{i,h_k}^{(\ell)}-\psi_{i})^2\Big)^{1/2}\Big\|_{0,\Omega}\|\gamma_j\|_{0,6} \\
\hskip 8em \lesssim \|\Psi_{h_k}^{(\ell)}-\Psi\|_{0,\Omega}\|\Gamma\|_{1,\Omega}.
  \end{array}
\end{equation}
For the second and third parts of (\ref{echange correlation}), using the proof
technique for (\ref{mvd}), the following estimate holds
\begin{equation}\label{eexx}
\big(V_{xc}(\rho_{\Psi_\xi})(\Psi_{h_k}^{(\ell)}-\Psi),\Gamma\big)+\big(  V_{xc}(\rho_{\Psi_{h_k}}^{(\ell)})(\mathbb P_{h_k}\Psi-\Psi_{h_k}^{(\ell)}),\Gamma  \big) \lesssim (\|\Psi-\Psi_{h_k}^{(\ell)}\|_{0,\Omega}+\|\Psi-\mathbb P_{h_k}\Psi\|_{0,\Omega})\|\Gamma\|_{1,\Omega}.
\end{equation}
Using (\ref{echange correlation}), (\ref{first of ex}) and (\ref{eexx}), there holds
\begin{eqnarray}\label{e1}
\Big(  V_{xc}(\rho_{\Psi_{h_k}}^{(\ell)})\mathbb P_{h_k}\Psi,\Gamma  \Big)-\Big(V_{xc}(\rho_{\Psi})\Psi, \Gamma\Big)
\lesssim \Big(\|\Psi-\Psi_{h_k}^{(\ell)}\|_{0,\Omega}+\|\Psi-\mathbb P_{h_k}\Psi\|_{0,\Omega}\Big)\|\Gamma\|_{1,\Omega}.
\end{eqnarray}

Taking $v_{h_k} = P_{h_k} \psi_{i}-\widehat{\psi}_{i,h_k}^{(\ell+1)}$ in
(\ref{nonlinear analysis}), due to Lemma \ref{coercive lemma}, (\ref{D}), (\ref{echange correlation}) and (\ref{e1}), we derive
\begin{equation}\label{1^2}
  \begin{array}{lll}
\|\mathbb P_{h_k} \Psi-\widehat{\Psi}_{h_k}^{(\ell+1)}\|_1^2
&=&\displaystyle\sum_{i=1}^{N}\|P_{h_k} \psi_{i}-\widehat{\psi}_{i,h_k}^{(\ell+1)}\|_1^2\\
&\lesssim& \displaystyle\sum_{i=1}^{N}\Big\{a_{\mu}(P_{h_k} \psi_{i}-\widehat{\psi}_{i,h_k}^{(\ell+1)},P_{h_k} \psi_{i}-\widehat{\psi}_{i,h_k}^{(\ell+1)}) \\
&&\hskip 1em + (V_{Har}(\rho_{\Psi_{h_k}^{(\ell)}})(P_{h_k} \psi_{i}-\widehat{\psi}_{i,h_k}^{(\ell+1)}),P_{h_k} \psi_{i}-\widehat{\psi}_{i,h_k}^{(\ell+1)})\\
&&\hskip 2em + (V_{xc}(\rho_{\Psi_{h_k}^{(\ell)}})(P_{h_k} \psi_{i}-\widehat{\psi}_{i,h_k}^{(\ell+1)}),P_{h_k} \psi_{i}-\widehat{\psi}_{i,h_k}^{(\ell+1)})\Big\}\\
    &\lesssim& \Big(|\Lambda-\Lambda_{h_k}^{(\ell)}|+\|\Psi-\Psi_{h_k}^{(\ell)}\|_0+\|\Psi-P_{h_k} \Psi\|_0\Big)\|\mathbb P_{h_k} \Psi-\widehat{\Psi}_{h_k}^{(\ell+1)}\|_1,
    \end{array}
\end{equation}
which yields
\begin{eqnarray}\label{Inequality_1}
\|\Psi-\widehat{\Psi}_{h_k}^{(\ell+1)}\|_1
&\lesssim& \|\Psi-\mathbb P_{h_k} \Psi\|_1+\|P_{h_k}\Psi-\widehat{\Psi}_{h_k}^{(\ell+1)}\|_1 \nonumber\\
&\lesssim& \|\Psi-\mathbb P_{h_k} \Psi\|_1+|\Lambda-\Lambda_{h_k}^{(\ell)}|+ \|\Psi-\Psi_{h_k}^{(\ell)}\|_0.
\end{eqnarray}
Since $S_{H,h}$ is a subspace of $S_{h_k}$, the Kohn-Sham equation (\ref{Nonlinear_Eig_Hh}) can be regarded as a subspace
approximation for (\ref{Nonlinear_Eigenvalue_Problem2 fem}).  From Assumption C
and (\ref{Inequality_1}), we have
\begin{eqnarray*}
\|\Psi-\Psi_{h_k}^{(\ell+1)}\|_1\lesssim  \|\Psi-\widehat{\Psi}_{h_k}^{(\ell+1)}\|_1
\lesssim  \|\Psi-\mathbb P_{h_k} \Psi\|_1+|\Lambda-\Lambda_{h_k}^{(\ell)}|+ \|\Psi-\Psi_{h_k}^{(\ell)}\|_0.
\end{eqnarray*}
This is the desired result (\ref{Error_Eigenfunction_H1_k1}).  The estimate
(\ref{Error_Eigenfunction_L2_k1}) can be obtained by Assumption C, and the proof
is completed.
\end{proof}

Theorem \ref{error_one_correction_Theorem} shows that the augmented subspace
method defined by Algorithm \ref{MMAA} can really improve the accuracy after
each iteration, which is the base for designing the nonnested augmented subspace method
for Kohn-Sham equation.

Next, we estimate the computational work of the nonnested augmented subspace method.  In
this paper, we use Mshmet and Mmg3d \cite{Dapogny,Frey} to realize the moving
mesh process. Therefore, the unit mesh principle shall be met
\cite{Dapogny,Frey}.  That is, for any $K\in\mathcal T_h$, there holds
\begin{eqnarray}\label{union}
e^{\rm T}\mathcal M(k)e=1,\quad \forall e\in E_K,
\end{eqnarray}
where $\mathcal M(k)$ is the metric on the mesh element $K$, $E_K$ is the union
of the boundaries of $K$.  Based on the definition of $\mathcal M$ and the unit
mesh principle (\ref{union}), we can change the length of each edge through
adjusting the value of $\varepsilon$, which then leads to the following
relationship for the number of degrees of freedom:
\begin{eqnarray}\label{prop}
N_k\approx \eta^{n-k}N_n,\quad k=1,2\cdots,n,
\end{eqnarray}
where $\eta$ denotes the coarsening rate between two consecutive meshes.

Let $W_k$ denote the computational work in the $k-$th finite element space,
$\mathcal O(M_{k_0})$ denote the computational work for solving the Kohn-Sham
equation in the first $k_0$ adaptive spaces, $n$ denote the number of mesh
levels, $\varpi$ denote the nonlinear iteration times, $\vartheta$ denote the
number of processes. We use parallel computing technique to assemble the matrix
in step 3 of Algorithm \ref{MMAA}, and the resultant algebraic system is solved by
the parallel algebraic multigrid method. Thus the computational work for the $N$
linear boundary value problems is $\mathcal O(NN_k/\vartheta)$.  For the
small-scale nonlinear eigenvalue problem involved in step 4 of Algorithm
\ref{MMAA}, the matrices are assembled through parallel computing technique, and
the small-scale algebraic eigenvalue problem is solved in serial computing with
computational work $\mathcal O(M_H)$.
Then the total computational work in the
$k-$th $(k>k_0)$ finite element space is
\begin{eqnarray}\label{workonelevel}
W_k = \mathcal O \Big(  N\displaystyle\frac{N_k}{\vartheta} +\varpi \Big(\displaystyle\frac{N_k}{\vartheta}+M_H\Big)   \Big).
\end{eqnarray}

\begin{theorem}
  Assume that the number of degrees of freedom on two consecutive meshes
  satisfies the proportional relationship (\ref{prop}).  Then the total
  computational work of Algorithm \ref{MMAA} can be estimated as follows
\begin{eqnarray}
W_{total} =\mathcal O \Big( \displaystyle\frac{N+\varpi}{\vartheta} N_n +  \varpi {\rm log} N_n M_H +M_{k_0} \Big).
\end{eqnarray}
\end{theorem}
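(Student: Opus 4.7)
The plan is to carry out a direct bookkeeping argument: split the total cost into the cost of the initial SCF phase on the first $k_0$ adaptive meshes and the cost of the subsequent augmented-subspace phase, then apply the per-level estimate (\ref{workonelevel}) and sum over levels. By the construction of Algorithm \ref{One_Correction_Step}, the first $k_0$ adaptive spaces are handled by the standard self-consistent field iteration, whose cumulative work is by definition $\mathcal{O}(M_{k_0})$. The remaining levels $k=k_0+1,\dots,n$ are handled by Algorithm \ref{MMAA}, each contributing
\begin{eqnarray*}
W_k=\mathcal{O}\!\left(N\frac{N_k}{\vartheta}+\varpi\Big(\frac{N_k}{\vartheta}+M_H\Big)\right),
\end{eqnarray*}
so the total work becomes $W_{total}=M_{k_0}+\sum_{k=k_0+1}^{n}W_k$.

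Next I would group the terms of the sum by their dependence on $N_k$ versus $M_H$. Factoring out $(N+\varpi)/\vartheta$ from the first group gives a factor $\sum_{k=k_0+1}^{n}N_k$, while the $M_H$ group gives $\varpi(n-k_0)M_H$. The central estimate is the geometric sum: using the proportionality (\ref{prop}), namely $N_k\approx\eta^{n-k}N_n$ with coarsening rate $\eta<1$,
\begin{eqnarray*}
\sum_{k=k_0+1}^{n}N_k\approx N_n\sum_{j=0}^{n-k_0-1}\eta^{j}\leq\frac{N_n}{1-\eta}=\mathcal{O}(N_n).
\end{eqnarray*}
So the first group telescopes to $\mathcal{O}\big((N+\varpi)N_n/\vartheta\big)$, independent of the actual number of levels.

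For the $M_H$ group I would extract a logarithmic factor by inverting (\ref{prop}): since $N_{k_0}\approx\eta^{n-k_0}N_n$, taking logarithms yields $n-k_0=\log(N_n/N_{k_0})/\log(1/\eta)=\mathcal{O}(\log N_n)$ as $N_{k_0}$ is a mesh-independent starting size. Therefore $\varpi(n-k_0)M_H=\mathcal{O}(\varpi\log N_n\,M_H)$. Combining the three pieces yields the claimed bound
\begin{eqnarray*}
W_{total}=\mathcal{O}\!\left(\frac{N+\varpi}{\vartheta}N_n+\varpi\log N_n\,M_H+M_{k_0}\right).
\end{eqnarray*}

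No step is really delicate: the main point to get right is the geometric-series estimate, together with the logarithmic conversion of the number of levels into $\log N_n$. The only modest subtlety is ensuring that the constants absorbed in the $\mathcal{O}$-notation depend only on $\eta$ (and not on $n$), which is automatic because $1/(1-\eta)$ and $1/\log(1/\eta)$ are fixed once the coarsening rate is fixed. Everything else is a direct substitution of (\ref{workonelevel}) into the telescoped sum.
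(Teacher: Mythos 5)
Your proposal is correct and follows essentially the same route as the paper's proof: substitute the per-level estimate (\ref{workonelevel}), separate off $M_{k_0}$, bound $\sum_{k=k_0+1}^{n}N_k=\mathcal{O}(N_n)$ via the geometric relation (\ref{prop}), and convert the number of levels $n-k_0$ into $\mathcal{O}(\log N_n)$ for the $M_H$ term. You simply spell out the geometric-series and logarithm steps that the paper leaves implicit.
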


\begin{proof}
  Based on the computational work (\ref{workonelevel}) in one finite element space
  and the proportional relationship (\ref{prop}), we have
\begin{eqnarray}
W_{total} &=&\sum_{k=1}^N W_k=\mathcal O \left(  M_{k_0} +  \sum_{k=k_0+1}^n\Big( N\displaystyle\frac{N_k}{\vartheta} +\varpi (\displaystyle\frac{N_k}{\vartheta}+M_H) \Big) \right)\nonumber\\
&=&\mathcal O \left( \displaystyle\frac{N+\varpi}{\vartheta} \sum_{k=k_0+1}^n N_k +  \varpi {\rm log} N_n M_H +M_{k_0} \right) \nonumber\\
&=&\mathcal O \left( \displaystyle\frac{N+\varpi}{\vartheta} N_n +  \varpi {\rm log} N_n M_H +M_{k_0} \right).
\end{eqnarray}
Then the proof is completed.
\end{proof}

\begin{remark}
  For complicated molecular system, the number of the desired eigenpairs $(N)$
  and the number of nonlinear iteration times $(\varpi)$ will be large. But
  fortunately, the software package FreeFEM \cite{Jolivet} has a good  scalability. Thus, when the number of processes $\vartheta$ is large, we can
  still derive an asymptotically optimal computational work estimate.
\end{remark}

\section{Numerical experiments}

In this section, we provide four numerical examples to validate the efficiency
of the proposed nonnested augmented subspace method in this paper. These numerical
examples are implemented on the high performance computing platform LSSC-IV in the State Key
Laboratory of Scientific and Engineering Computing, Chinese Academy of
Sciences. Each computing node has two 18-core Intel Xeon Gold 6140 processors at
2.3 GHz and 192 GB memory. All the parallel solving processes in this paper use
72 cores.  We adopt the open source finite element software package FreeFEM
\cite{Hecht,Jolivet} to do the numerical simulation.  The anisotropic
measurement is generated by the Meshmet of Hessian matrix based on adaptive
function (density function), and the adaptive mesh is generated by the software
package Mmg3d \cite{Dapogny,Frey}.  The involved linear boundary value problems
are solved by PETSc-GAMG \cite{Balay}, and the linear eigenvalue problems are
solved by Krylov-Schur method \cite{Hernandez} in SLEPc \cite{Roman}.  In all
the numerical experiments, we set the threshold in Algorithm
\ref{One_Correction_Step} to be $K_{asm}=4$, and set the positive constant $\mu$ in
Algorithm \ref{MMAA} to be $\mu=8M\big(\sum_{k=1}^{M}Z_K^2\big)$.

For the nonlinear eigenvalue problems, we use the Anderson mixing scheme to
accelerate the convergence rate \cite{Anderson}. The detailed process can be described as follows:
Let $\rho_{h,in}^{(m)}$ and
$\rho_{h,out}^{(m)}$ represent the input and output electron densities of the
$m$-th self-consistent iteration.  To input the $(m+1)$-th self-consistent
iteration, $\rho_{h,in}^{(m+1)}$ is computed as follows:
\begin{eqnarray}
\rho_{h,in}^{(m+1)} = \beta \widetilde \rho_{h,out} +(1-\beta) \widetilde \rho_{h,in},
\end{eqnarray}
where
\begin{eqnarray}\label{mix2}
\widetilde \rho_{h,in(out)} = \alpha_m \rho_{h,in(out)}^{(m)} +\sum_{j=m_0}^{m-1} \alpha_j\rho_{h,in(out)}^{(j)},
\end{eqnarray}
and the sum of the coefficients equals to one, i.e.,
\begin{eqnarray}\label{mix3}
\alpha_{m_0} +\alpha_{m_0+1} + \cdots+\alpha_m=1.
\end{eqnarray}
According to (\ref{mix3}), the equation (\ref{mix2}) can be written as
\begin{eqnarray}\label{mix4}
\widetilde \rho_{h,in(out)} =  \rho_{h,in(out)}^{(m)} +\sum_{j=m_0}^{m-1} \alpha_j\big(\rho_{h,in(out)}^{(j)}-\rho_{h,in(out)}^{(m)}\big),
\end{eqnarray}
Here the positive integer $m_0\in [1,m]$, and the depth of the mixing scheme is
defined by depth$:=m-m_0+1$, $\beta$ is a given weight value.

Denoting $F^{(m)}=\rho_{h,out}^{(m)}-\rho_{h,in}^{(m)}$ and $\widetilde F=\widetilde \rho_{h,out}-\widetilde \rho_{h,in}$. Based on (\ref{mix4}), there holds
\begin{eqnarray}\label{mix5}
\widetilde F =  F^{(m)} +\sum_{j=m_0}^{m-1} \alpha_j\big(F^{(j)}-F^{(m)}\big).
\end{eqnarray}
The coefficients $\alpha_j$ are determined by minimizing
$\|\widetilde F\|_2^2=\|\widetilde \rho_{h,in}-\widetilde \rho_{h,out}\|_2^2$,
which amounts to solve the following equations:
\begin{eqnarray}\label{mix6}
\sum_{k=m_0}^{m-1}\big(F^{(m)}-F^{(j)},F^{(m)}-F^{(k)}\big)\alpha_k=\big(F^{(m)}-F^{(j)},F^{(m)}\big),\quad j=m_0,\cdots,m-1.
\end{eqnarray}
In our numerical experiments, we choose the depth$=5$, the damping coefficient
$\beta=0.7$.

\subsection{Kohn-Sham equation for Helium}\label{Example_He}

In the first example, we solve the following Kohn-Sham equation for Helium:
\begin{equation}\label{He}
\left\{
\begin{array}{l}
\Big(-\displaystyle \frac{1}{2}\Delta -\displaystyle\frac{2}{|x|} + \displaystyle\int_{\Omega} \frac{\rho(y)}{|x-y|}dy+V_{xc}\Big)\psi=
  \lambda \psi,\quad \text{in } \Omega,\\
  \\
  \psi= 0,\quad  \text{on } {\partial\Omega},\\
  \\
\displaystyle \int_{\Omega}|\psi|^2dx=1,
\end{array}
\right.
\end{equation}
where $\Omega=(-10,10)^3$. In this example, the electron density
$\rho = 2|\psi|^2$.  The exchange-correlation potential is chosen according to
(\ref{expot}) and (\ref{copot}).  The experimental value of non-relativistic
ground state energy of Helium atom is -2.90372 hartree \cite{Veillard}.

Here, we use three types of algorithms to solve the Kohn-Sham equation
(\ref{He}).  The first one uses the self-consistent field iteration to solve
(\ref{He}) directly on the fixed structure mesh.  The second one uses the
self-consistent field iteration to solve (\ref{He}) directly on the
adaptive moving mesh (i.e. step 3 of Algorithm \ref{One_Correction_Step} is solved
directly by the self-consistent field iteration).  The third one uses the
nonnested augmented subspace method to solve (\ref{He}).  In the moving mesh adaptive
process, the metric matrix is given by the modified Hessian matrix (\ref{m1}) of
$\rho^{1/2}(x)$.  The tolerance in Algorithm \ref{One_Correction_Step} is set to
be $1E-3$.  The tolerance in Algorithm \ref{MMAA} is set to be $2E-4$.

\begin{figure}[!htbp]
\centering
\includegraphics[width=5.5cm]{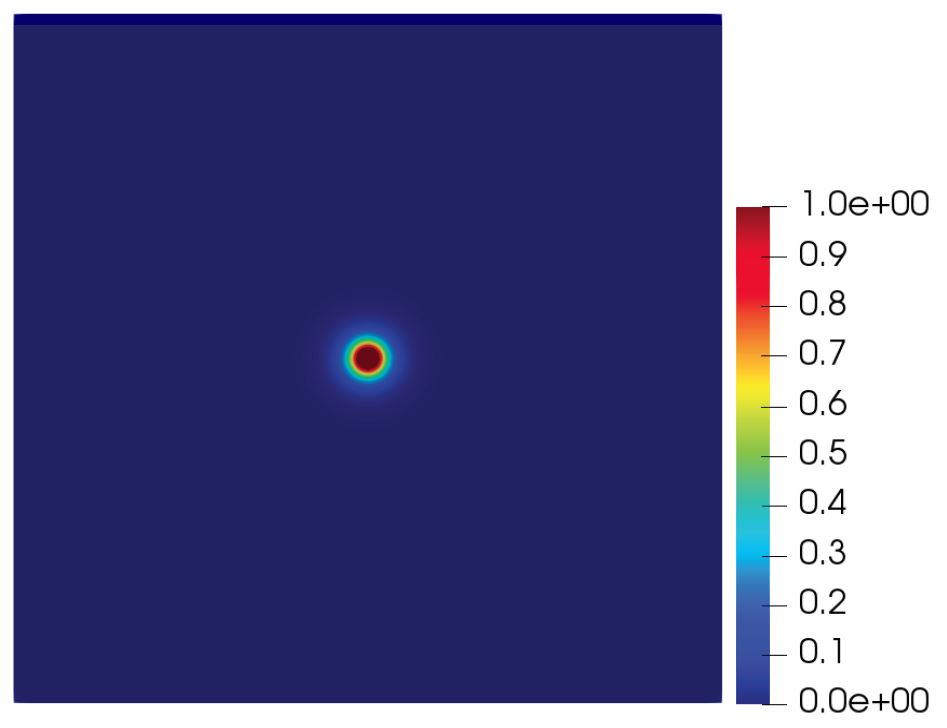} \ \ \ \ \ \ \ \ \ \
\includegraphics[width=4.3cm]{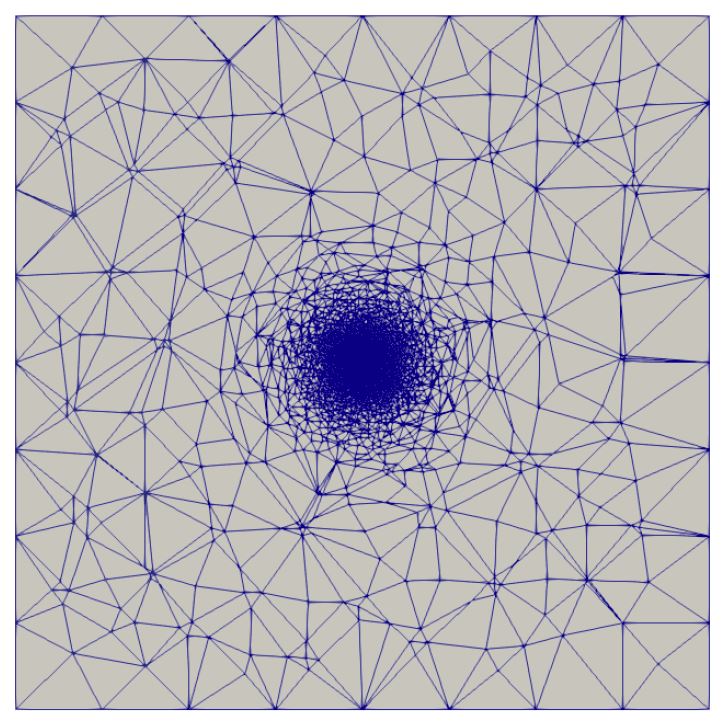}
\caption{Contour plot of the electron density (left) and the adaptive moving mesh (right) for Example 1}\label{hemesh}
\end{figure}

\begin{figure}[!htbp]
\centering
\includegraphics[width=6.0cm,height=4.5cm]{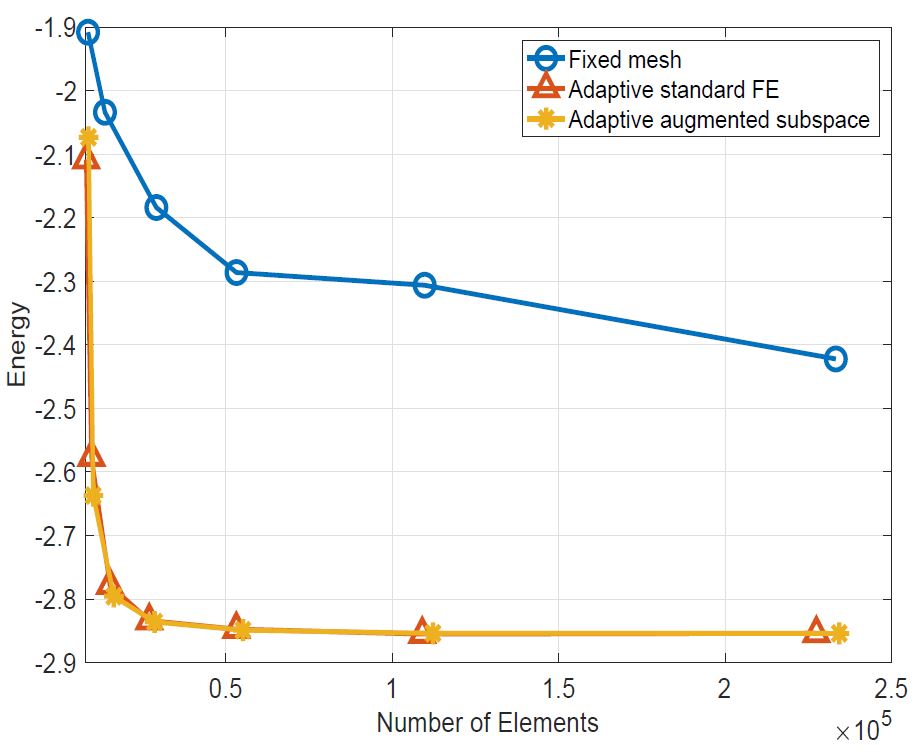} \ \ \ \ \
\includegraphics[width=6.0cm,height=4.5cm]{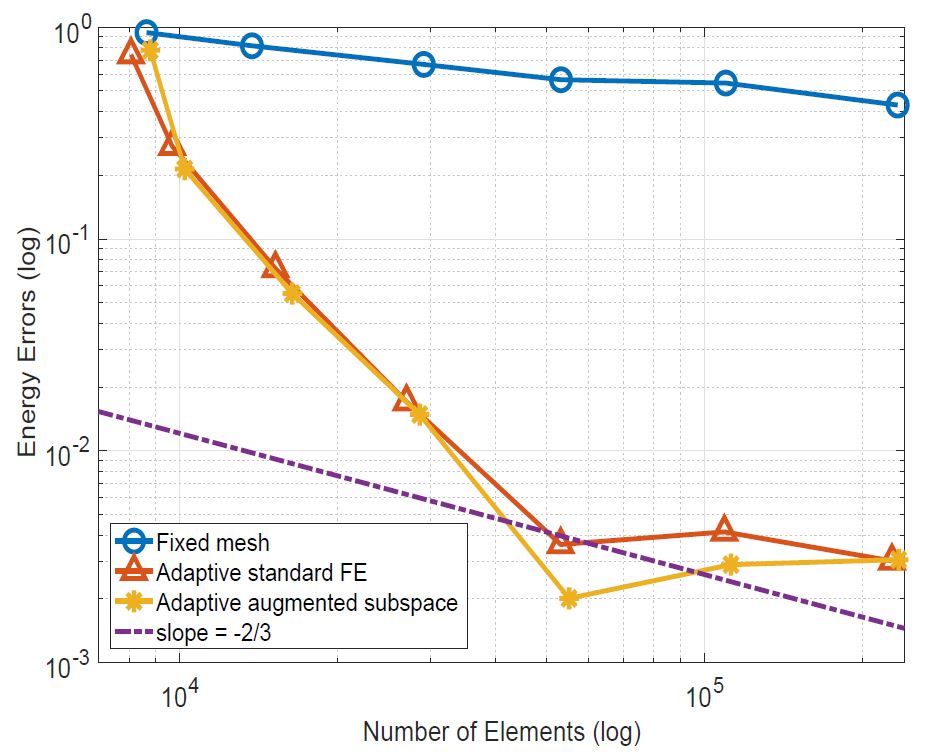}
\caption{The ground state energy (left) and the corresponding error estimates (right) for Example 1}\label{heerror}
\end{figure}

The density function and the adaptive mesh of the nonnested augmented subspace method are
presented in Figure \ref{hemesh}.  The ground state energy and the corresponding
error estimates derived by these three methods are presented in Figure
\ref{heerror}.  It can be seen from Figure \ref{heerror} that the ground state
energy of Helium atom first decreases quickly and then tends to be stable with
the adaptive refinement of the mesh.  Besides, we also can see that the adaptive
iterative methods have a better accuracy than the finite element method with the
uniform mesh.

\subsection{Kohn-Sham equation for Hydrogen-Lithium}\label{KS_HL}
In the second example, we solve the following Kohn-Sham equation for
Hydrogen-Lithium:
\begin{equation}\label{HLi}
\left\{
\begin{array}{l}
\Big(-\displaystyle\frac{1}{2}\Delta -\displaystyle\frac{3}{|x-r_1|}-\displaystyle\frac{1}{|x-r_2|}+
  \displaystyle\int_{\Omega}\frac{\rho(y)}{|x-y|}dy+V_{xc}\Big)\psi_i= \lambda_i\psi_i, \ \ \text{in } \Omega,\  \ \ \   i=1,2,\\
  \\
  \psi_i= 0,\ \ \ \ \ \ \text{on } {\partial\Omega}, \ \ i=1,2,\\
  \\
\displaystyle\int_\Omega \psi_i\psi_jd\Omega=\delta_{i,j},\ \ \ i,j=1,2,
\end{array}
\right.
\end{equation}
where $\Omega=(-10,10)^3$.  In this equation, the electron density
$\rho = 2(|\psi_1|^2+|\psi_2|^2)$, $r_j \ (j=1,2)$ is the position of lithium
atom and hydrogen atom and we take $r_1=(-1.0075,0,0)$, $r_2=(2.0075,0,0)$.  The
exchange-correlation potential is chosen according to (\ref{expot}) and
(\ref{copot}).  The experimental value of non relativistic ground state energy
of Hydrogen-Lithium molecular system is -8.070 hartree \cite{Cencek}.  In this example, we set the tolerance in Algorithm \ref{One_Correction_Step} to
be $1E-3$ and tolerance in Algorithm \ref{MMAA} to be $2E-4$.  We also use these
three numerical methods as described in the first example to solve (\ref{HLi}).

\begin{figure}[!htbp]
\centering
\includegraphics[width=5.5cm]{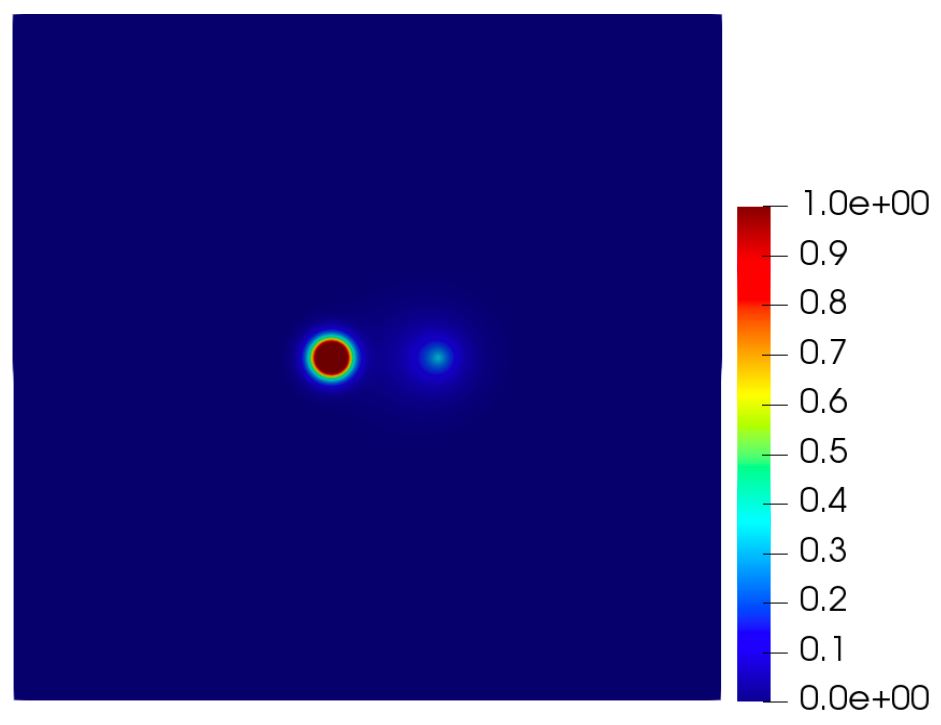} \ \ \ \ \ \ \ \ \ \
\includegraphics[width=4.3cm]{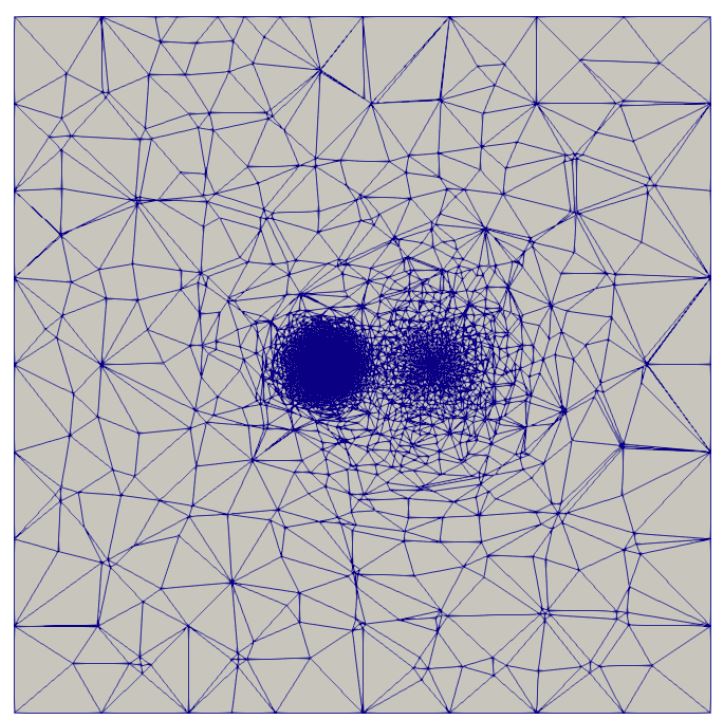}
\caption{Contour plot of the electron density (left) and the adaptive moving
  mesh (right) for Example 2}\label{hlimesh}
\end{figure}

\begin{figure}[!htbp]
\centering
\includegraphics[width=6cm,height=4.5cm]{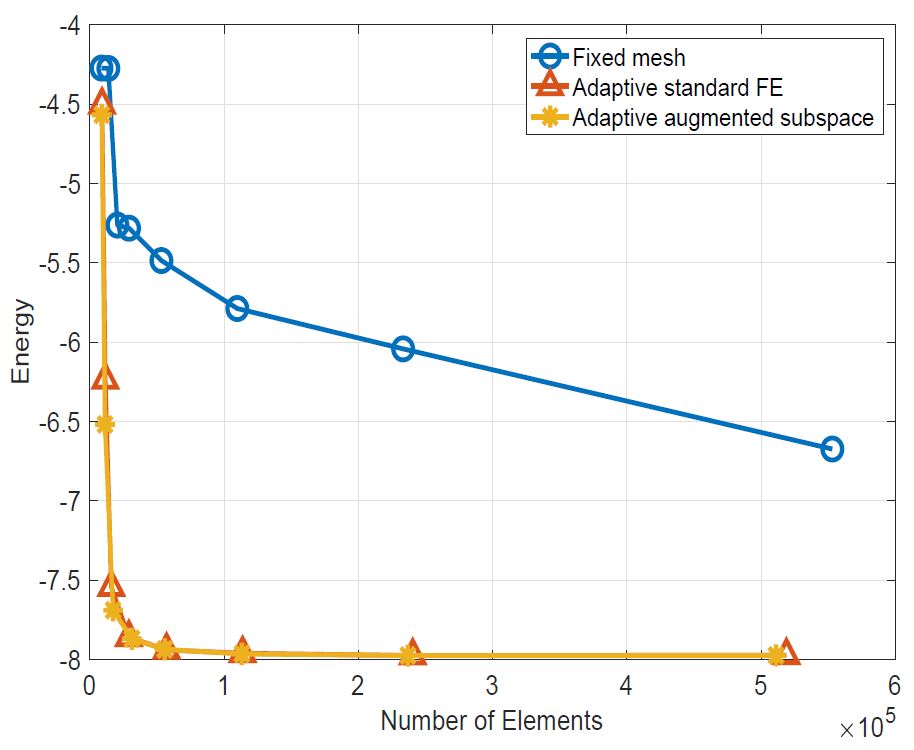} \ \ \ \ \
\includegraphics[width=6cm,height=4.5cm]{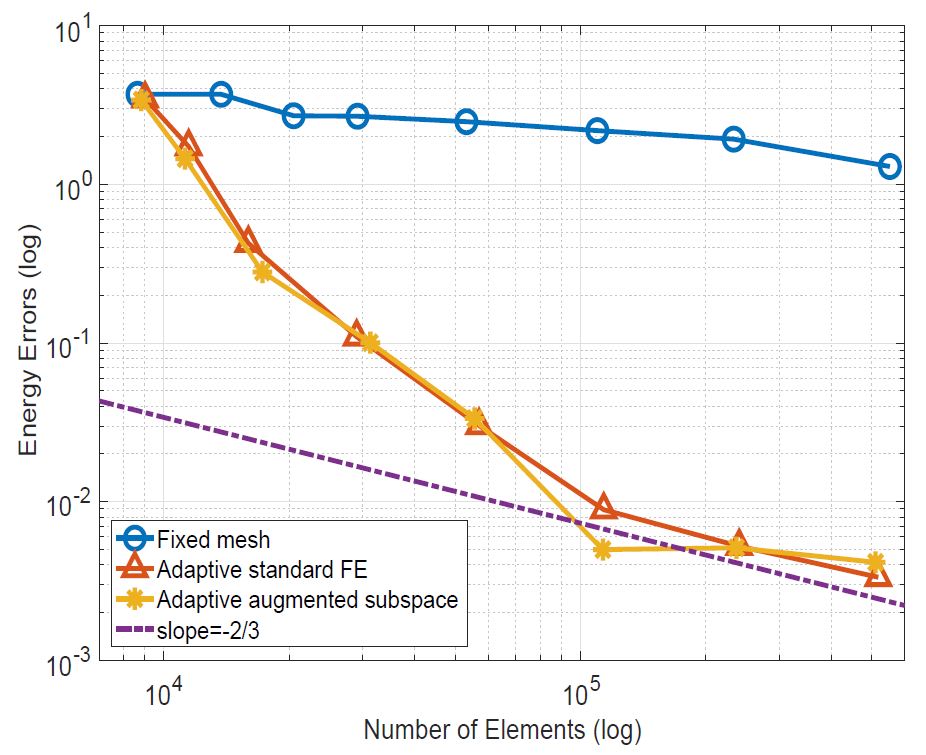}
\caption{The ground state energy (left) and the corresponding error estimates
  (right) for Example 2}\label{hlierror}
\end{figure}

The density function and the adaptive mesh for Hydrogen-Lithium are presented in
Figure \ref{hlimesh}.  The ground state energy and the corresponding error
estimates derived by these three methods are presented in Figure \ref{hlierror}.
It can be seen from Figure \ref{hlierror} that the ground state energy of
Hydrogen-Lithium decreases quickly to a stable constant with the adaptive
refinement of the mesh. Besides, we also can see that the adaptive iterative
methods have a better accuracy than the finite element method with uniform mesh.
At the same time, it can be seen that the convergence rate of Algorithm
\ref{MMAA} tends to $2/3$ with mesh refinement.

\begin{table}[htbp]
\begin{center}
\begin{tabular}{|c|c|c|c|c|c|c|c|c|}\hline
Elements (Direct method)& 9008 &  11463 &  15943  & 29015  & 57078  & 113729  & 240435  & 518786 \\ \hline
Time (Direct method) & 7.024  & 10.231  & 14.188  & 18.587  & {\bf 23.694}  & {\bf 32.339}  & {\bf 46.215}  & {\bf 74.595}  \\ \hline
Elements (Algorithm \ref{MMAA})&  8819 & 11240 & 17235 & 31297 & 55690 & 113168 & 236780 & 510949  \\ \hline
Time (Algorithm \ref{MMAA}) &  8.642  & 11.176  & 15.370  & 19.260  & {\bf 23.340}  & {\bf 30.560}  & {\bf 38.086}  & {\bf 49.016} \\ \hline
\end{tabular}
\end{center}
\caption{CPU time (in seconds) for the nonnested augmented subspace method and the direct
  adaptive finite element method.}
\label{time4hli}
\end{table}

In Table \ref{time4hli}, we present the computational time of the nonnested augmented
subspace method and the direct adaptive finite element method (i.e. solve the
Kohn-Sham equation directly using the moving mesh adaptive technique).  It can
be seen that the nonnested augmented subspace method is more efficient when the number of
mesh elements reaches 55690 or more, and the advantage is more obvious with the
increase of the number of mesh elements.

\subsection{Kohn-Sham equation for Methane}\label{KS_ch4}

In the third example, we consider the Kohm-Sham equation for Methane.  To show
the efficiency of the nonnested augmented subspace method intuitively, we also use these
three methods described in Example \ref{Example_He} to solve the Methane
molecules.  The computational domain is set to be $\Omega=(-10,10)^3$, and the
atomic positions of Methane are shown in Table \ref{position4c4h4}.
\begin{table}[htbp]
\begin{center}
\begin{tabular}{|c|c|c|c|c|}\hline
Atom & x & y & z & Nuclear charge \\ \hline
$\rm C_1$ & 0.0000   &  0.0000   & 0.0000   & 6   \\ \hline
$\rm H_2$ &  1.3092  &  1.3092   & 1.3092   & 1   \\ \hline
$\rm H_3$ &  -1.3092 &  -1.3092  & 1.3092   & 1   \\ \hline
$\rm H_4$ &  1.3092  &  -1.3092  & -1.3092  & 1   \\ \hline
$\rm H_5$ &  -1.3092 &  1.3092   & -1.3092  & 1   \\ \hline
\end{tabular}
\end{center}
\caption{The position and the nuclear charge of each atom in Methane.}
\label{position4c4h4}
\end{table}

For a full potential calculation, there are total ten electrons.  Since we don't
consider spin polarisation, five eigenpairs need to be calculated.  The
tolerance setting in Algorithms \ref{One_Correction_Step} and \ref{MMAA} is the
same as that of Example \ref{Example_He}.  The reference value of non
relativistic ground state energy of methane molecule is set to be -40.41 hartree
\cite{Johnson}.

Figure \ref{ch4mesh} shows the electron density distribution and adaptive mesh
of Methane molecule.  Figure \ref{ch4error} shows the change curve of ground
state energy and the corresponding error estimates for Methane molecule with the
increasement of the number of mesh elements.

\begin{figure}[!htbp]
\centering
\includegraphics[width=5.5cm]{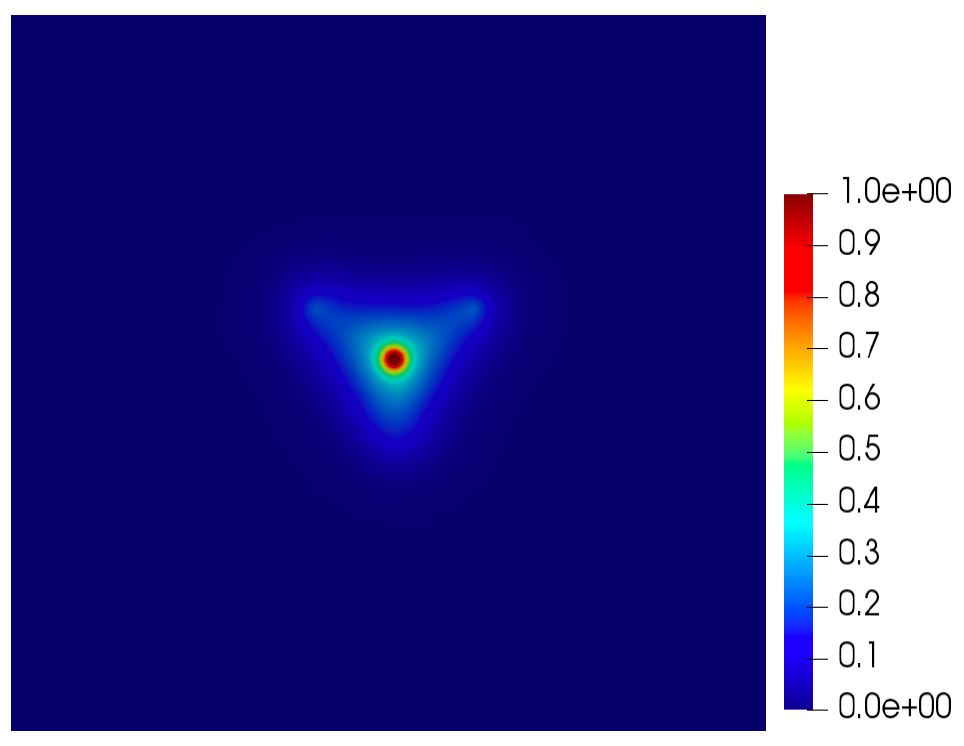} \ \ \ \ \ \ \ \ \ \
\includegraphics[width=4.3cm]{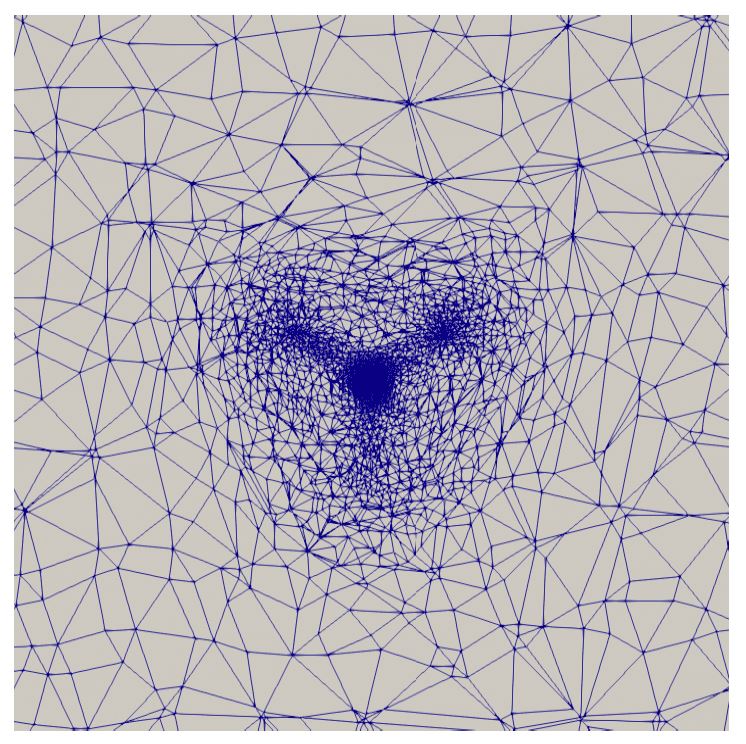}
\caption{Contour plot of the electron density (left) and the adaptive moving
  mesh (right) for Example 3}\label{ch4mesh}
\end{figure}

\begin{figure}[!htbp]
\centering
\includegraphics[width=5.5cm,height=5cm]{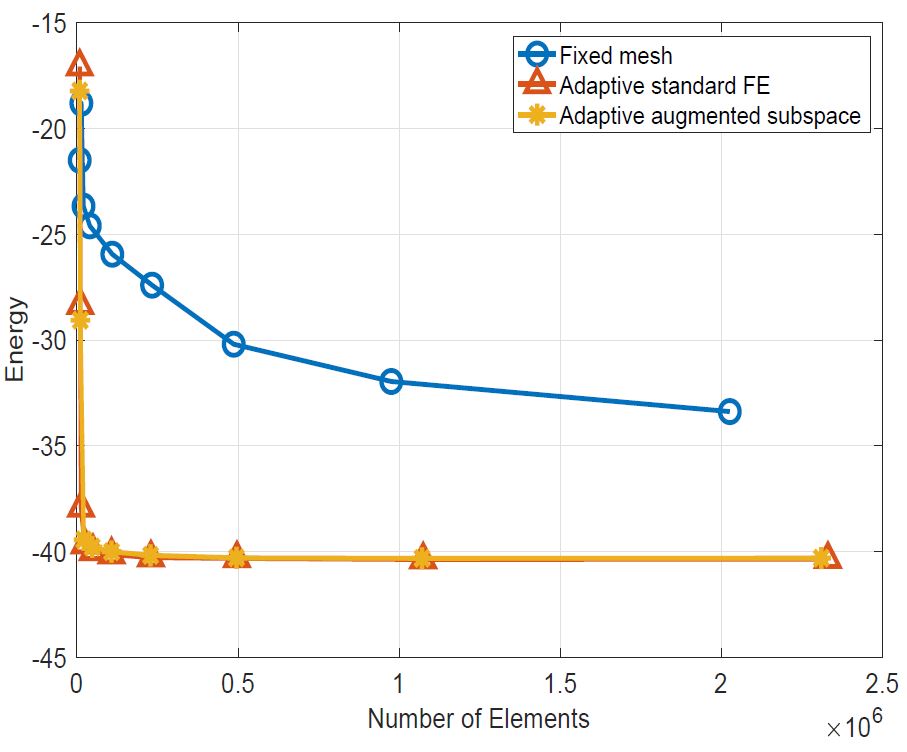} \ \ \ \ \ \ \ \ \ \
\includegraphics[width=5.5cm,height=5cm]{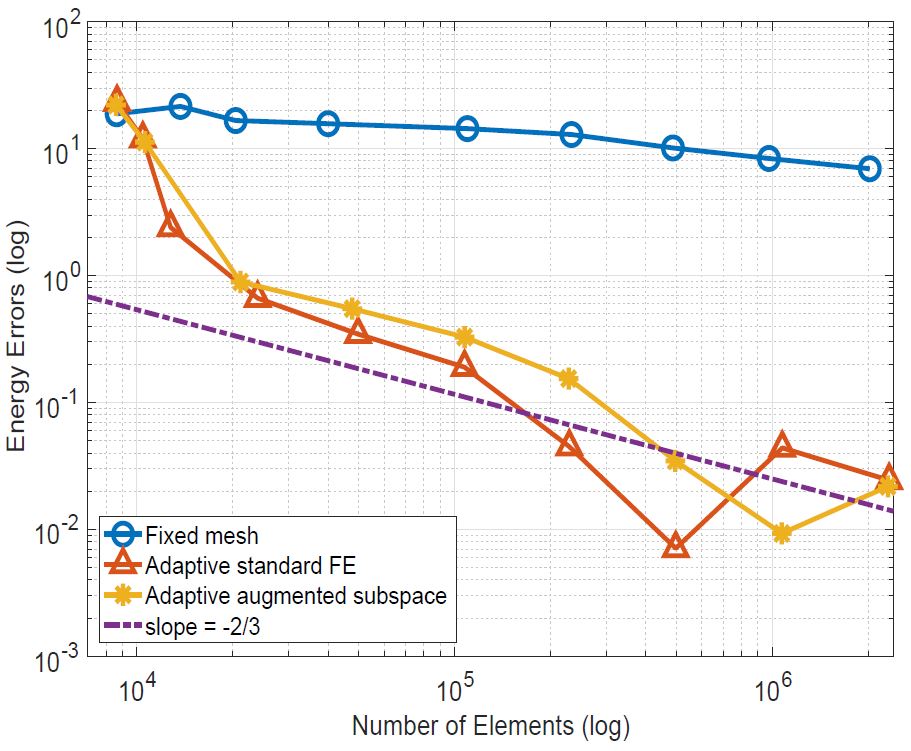}
\caption{The ground state energy (left) and the corresponding error estimates
  (right) for Example 3}\label{ch4error}
\end{figure}

Table \ref{time4c4h4} shows the computational time of the nonnested augmented subspace
method and the direct adaptive finite element method.  It can be seen that when
the number of mesh elements reaches 499064, the nonnested augmented subspace method has an
advantages in solving efficiency over the direct adaptive finite element method.

\begin{table}[htbp]
\begin{center}
\begin{tabular}{|c|c|c|c|c|c|c|c|c|c|}\hline
Elements (Direct method) & 10208 &  11496  & 22267  & 45982  & 102415  & 225074  & 492919  & 1072524  & 2329210 \\ \hline
Time (Direct method)  & 10.904 &  15.141 &  20.737 &  29.902 &  42.412  & 68.716  & {\bf 140.839}  & {\bf 312.876} &  {\bf 756.633}  \\ \hline
Elements (Algorithm \ref{MMAA}) & 10545  & 10599 &  22682 &  51646 &  108014  & 230730  & 499064 &  1083722 &  2342607 \\ \hline
Time (Algorithm \ref{MMAA})   & 10.762  & 14.027  & 19.386  & 30.579  & 53.005  & 70.812  & {\bf 86.976}  & {\bf 149.184}  & {\bf 209.735} \\ \hline
\end{tabular}
\end{center}
\caption{CPU time (in seconds) for the nonnested augmented subspace method and the direct
  adaptive finite element method.}
\label{time4c4h4}
\end{table}

\subsection{Kohn-Sham equation for Benzene}\label{KS_c6h6}

In the last example, we consider the Kohn-Sham equation for the Benzene
molecule.  The computational domain is taken as $\Omega=(-10,10)^3$, and the
atomic positions of Methane are shown in Table \ref{position4c6h6}.

\begin{table}[htbp]
\begin{center}
\begin{tabular}{|c|c|c|c|c|}\hline
Atom & x & y & z & Nuclear charge \\ \hline
$\rm C_1$    &  0.0000  & 1.3970   & 0.0000  & 6   \\ \hline
$\rm C_2$    &  1.2098  & 0.6985   & 0.0000  & 6   \\ \hline
$\rm C_3$    &  1.2098  & -0.6985  & 0.0000  & 6   \\ \hline
$\rm C_4$    &  0.0000  & -1.3970  & 0.0000  & 6   \\ \hline
$\rm C_5$    &  -1.2098 & -0.6985  & 0.0000  & 6   \\ \hline
$\rm C_6$    &  -1.2098 & 0.6985   & 0.0000  & 6   \\ \hline
$\rm H_7$    &  0.0000  & 2.4810   & 0.0000  & 1   \\ \hline
$\rm H_8$    &  2.1486  & 1.2405   & 0.0000  & 1   \\ \hline
$\rm H_9$    &  2.1486  & -1.2405  & 0.0000  & 1   \\ \hline
$\rm H_{10}$ &  0.0000  & -2.4810  & 0.0000  & 1   \\ \hline
$\rm H_{11}$ &  -2.1486 & -1.2405  & 0.0000  & 1   \\ \hline
$\rm H_{12}$ &  -2.1486 & 1.2405   & 0.0000  & 1   \\ \hline
\end{tabular}
\end{center}
\caption{The position and the nuclear charge number of each atom in Benzene.}
\label{position4c6h6}
\end{table}

For a full potential calculation, there are total 42 electrons.  Since we don't
consider spin polarisation, 21 eigenpairs need to be calculated.  The tolerance
setting in Algorithms \ref{One_Correction_Step} and \ref{MMAA} is the same as
that of Example \ref{Example_He}.  The reference value of non relativistic
ground state energy of methane molecule is set to be -231.78 hartree
\cite{Johnson}.

\begin{table}[htbp]
\begin{center}
\begin{tabular}{|c|c|c|c|c|c|c|c|c|}\hline
Elements (Direct method) & 8946  & 13095 &  21675  & 58635 &  198145 &  535501  & 1245081 &  2789529 \\ \hline
Time (Direct method)  & 19.079 &  28.454 &  40.958  & 63.495  & {\bf 109.606} &  {\bf 223.390} &  {\bf 470.353} &  {\bf 1027.426}  \\ \hline
Elements (Algorithm \ref{MMAA}) & 8897  & 12505 &  20535 &  62348 &  207362 &  550236  & 1262297  & 2796022 \\ \hline
Time (Algorithm \ref{MMAA})   & 14.235  & 21.239  & 35.367  & 62.337  & {\bf 79.789}  & {\bf 133.229} &  {\bf 187.119} &  {\bf 294.440} \\ \hline
\end{tabular}
\end{center}
\caption{CPU time (in seconds) for the nonnested augmented subspace method and the direct
  adaptive finite element method.}
\label{time4c6h6}
\end{table}

\begin{figure}[!htbp]
\centering
\includegraphics[width=5.5cm]{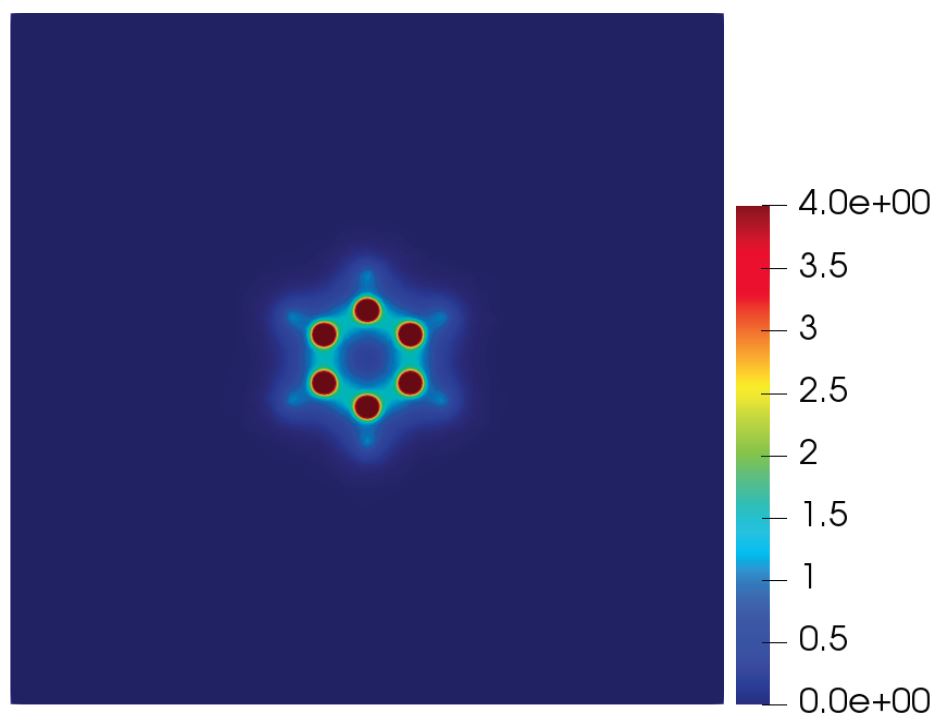} \ \ \ \ \ \ \ \ \ \
\includegraphics[width=4.3cm]{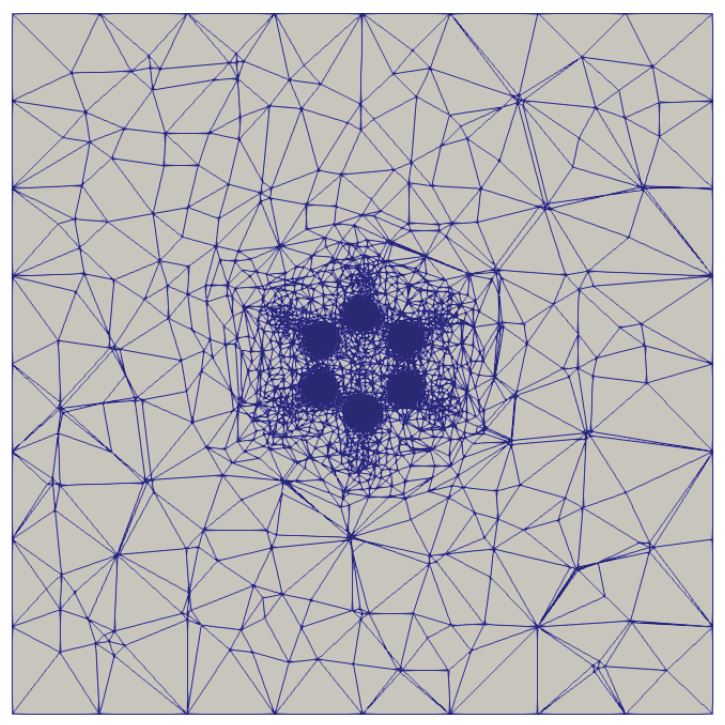}
\caption{Contour plot of the electron density (left) and the adaptive moving
  mesh (right) for Example 4}\label{c6h6mesh}
\end{figure}

Three numerical methods described in Example \ref{Example_He} are tested in
this example.  Figure \ref{c6h6mesh} shows the electron density and adaptive
mesh of the nonnested augmented subspace method.  It can be seen from Figure
\ref{c6h6mesh} that the mesh nodes gather in the region with singular electron
density (near the atomic centers), while in the region far away from the atomic
centers, the mesh node distribution is sparse, which is consistent with the
distribution of electron density function.

\begin{figure}[!htbp]
\centering
\includegraphics[width=5.5cm,height=5cm]{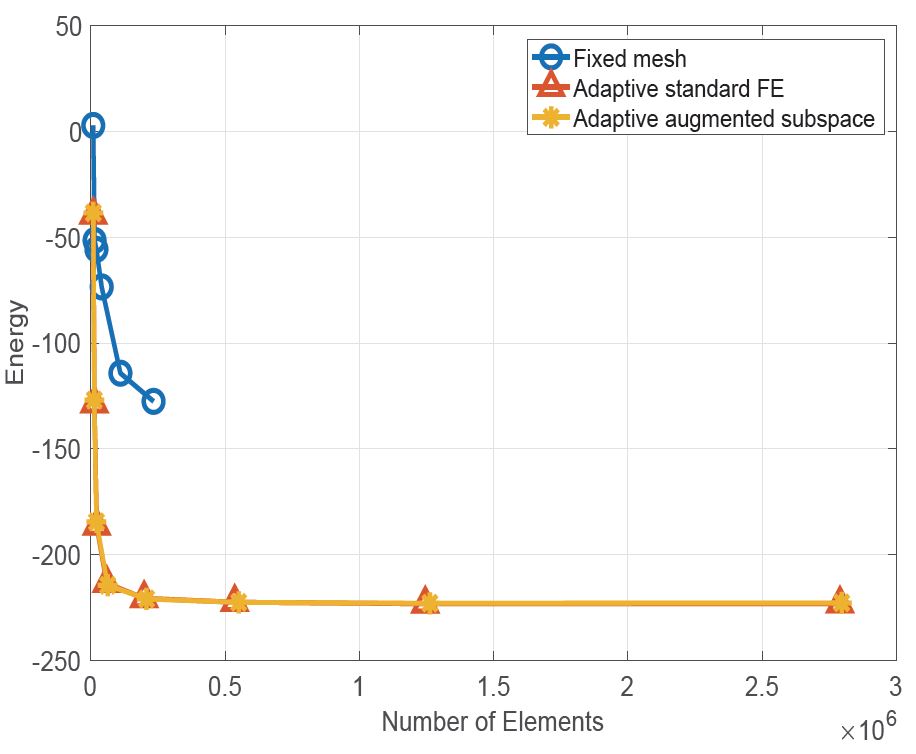} \ \ \ \ \ \ \ \ \ \
\includegraphics[width=5.5cm,height=5cm]{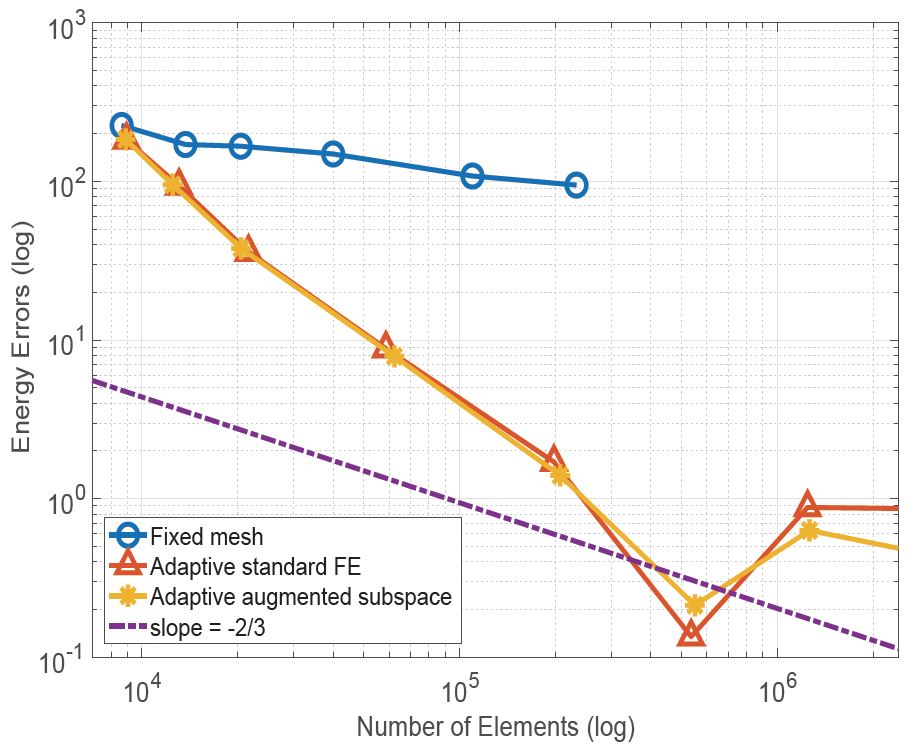}
\caption{The ground state energy (left) and the corresponding error estimates
  (right) for Example 4}\label{c6h6error}
\end{figure}

Figure \ref{c6h6error} shows the decline curve of ground state energy and
corresponding error estimates with mesh adaptive movement.  From Figure
\ref{c6h6error}, we can see that compared with the moving mesh adaptive method
(red line and yellow line), the finite element method performed on the fixed
structural mesh (blue line) derives a slower convergence rate.  Moreover, when
the number of mesh elements reaches 233280, the nonlinear iteration on fixed structure mesh still does
not converge even if the Anderson acceleration technology is adopted.

\begin{figure}[!htbp]
\centering
\includegraphics[width=5.5cm,height=5cm]{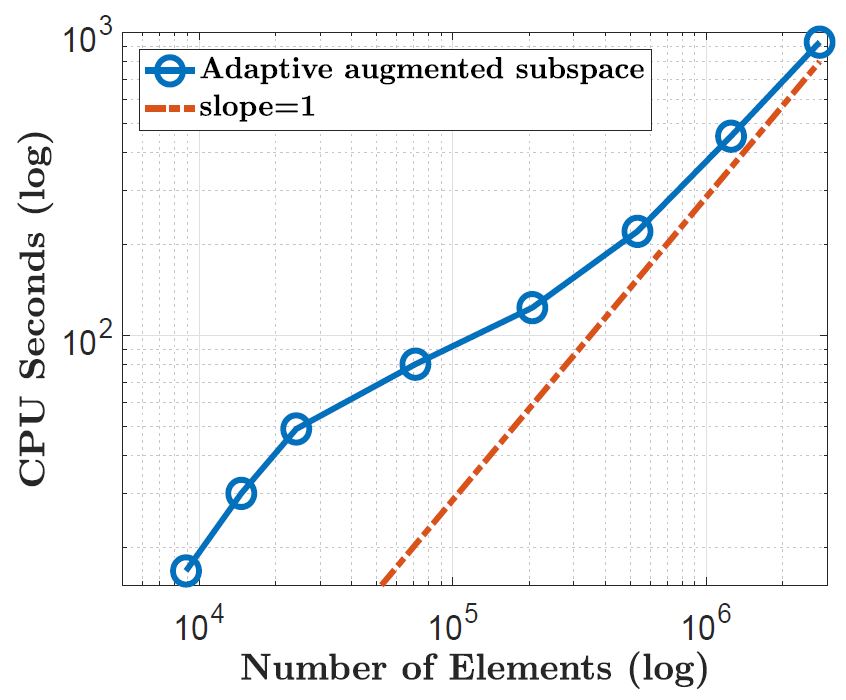} \ \ \ \ \ \ \ \ \ \
\includegraphics[width=5.5cm,height=5cm]{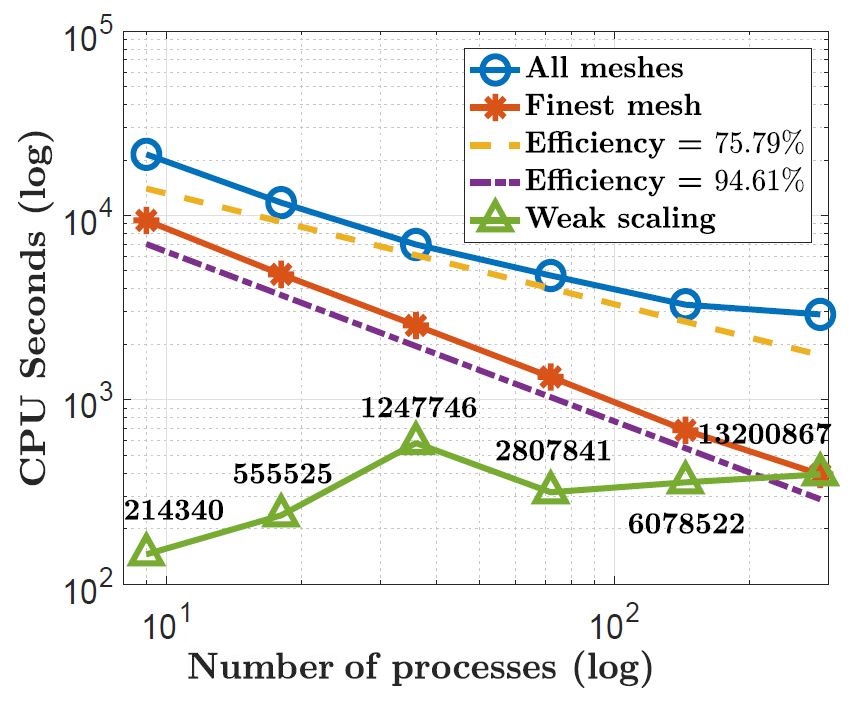}
\caption{The computational time and parallel scalability of the nonnested augmented
  subspace method for Example 4}\label{c6h6para}
\end{figure}

Table \ref{time4c6h6} compares the computational time of the nonnested augmented subspace
method and the direct adaptive finite element method.  It can be seen that when
the number of mesh elements reaches 207362, the nonnested augmented subspace method has advantage in solving efficiency over the direct adaptive finite element
method.

In addition, we also present the computational time of the nonnested augmented subspace
method for Benzene in Figure \ref{c6h6para} to test its linear complexity and
the parallel scalability.  Figure \ref{c6h6para} shows that when the number of
mesh elements reaches 500000, the nonnested augmented subspace method has a linear
computational complexity with the increase of the number of mesh elements.  The
strong scalability and weak scalability is also presented in Figure
\ref{c6h6para}.  In Figure \ref{c6h6para}, the label ``All meshes" (blue line)
represents the total computational time from the coarsest mesh to the finest
mesh.  The labeled ``finest mesh" (red line) represents only the computational
time in the finest mesh, The green line is the result of weak scalability, and
9, 18, 36, 72, 144 and 288 processes are used, respectively.  From Figure
\ref{c6h6para}, we can find that the nonnested augmented subspace method has a good scalability.

\section{Concluding remarks}
In this paper, a nonnested augmented subspace method is proposed to solve the
Kohn-Sham equation based on the moving mesh adaptive technique and augmented subspace method.
Because the moving mesh adaptive technology is used to
generate nonnested mesh sequence, the redistributed mesh is almost optimal. In
the solving process, we transform the classical nonlinear eigenvalue problem on
a fine mesh into a series of linear boundary value problems of the same scale on
the adaptive mesh sequence, and subsequently correct the approximate solutions by solving the
small-scale nonlinear eigenvalue problems in a special low-dimensional augmented
subspace.  Since solving large-scale nonlinear eigenvalue problem is avoided
which is time-consuming, the total solving efficiency can be greatly improved.
Some numerical experiments are also presented to verify the computational
efficiency and energy accuracy of the algorithm proposed in this paper.

\section{Acknowledgement}

This research is supported partly by National Key R\&D Program of China
2019YFA0709600, 2019YFA0709601, National Natural Science Foundations of China
(Grant Nos. 11922120, 11871489 and 11771434), FDCT of Macao SAR (0082/2020/A2),
MYRG of University of Macau (MYRG2020-00265-FST), the National Center for
Mathematics and Interdisciplinary Science, CAS, Beijing Municipal Natural Science Foundation (Grant No. 1232001),
and the General projects of science and technology plan of Beijing Municipal Education Commission (Grant No. KM202110005011).



\end{document}